\documentclass[12pt]{amsart}
\usepackage{amsmath}
\usepackage{amssymb}
\usepackage{longtable}
\usepackage{siunitx}
\usepackage{booktabs} 
\usepackage{tabularx} 
\usepackage{tabu} 
\usepackage{graphics}
\usepackage{cite}
\newtheorem{theorem}{Theorem}
\theoremstyle{plain}

\newtheorem{lemma}{Lemma}

\numberwithin{equation}{section}
\numberwithin{lemma}{section}
\numberwithin{theorem}{section}
\numberwithin{corollory}{section}
\numberwithin{proposition}{section}
\begin{document}

	\title{Unit  Groups of   Group Algebras of Abelian Groups of order 32}
	\author{Suchi Bhatt}
	\address{Department of Mathematics and Scientific CompuTing, M. M. M. University of Technology, Gorakhpur,U. P.,  India}
	\email{1995suchibhatt@gmail.com}
	\author{Harish Chandra}
	\address{Department of Mathematics and Scientific CompuTing, M. M. M. University of Technology, Gorakhpur, U. P.,  India}
	\email{hcmsc@mmmut.ac.in}		
	\maketitle
	\begin{abstract}
	
	Let $F$  be a finite field of characteristic $p>0$  with $q = p^{n}$ elements. In this paper, a complete characterization of the unit groups $U(FG)$ of group algebras $FG$ for the  abelian groups of order $32$, over finite field of characteristic $p>0$ has been  obtained.
\end{abstract}
  \text{Key words:}\; Group algebras, Unit groups,  Jacobson radical.\\
   \text{Mathematics Subject Classification \text{(2010)}}: 16S34, 17B30.

     \section{Introduction}
     Let $FG$ be the group algebra of a group $G$ over a field $F$. Suppose $U(FG)$ be the group of all invertible elements of the group algebra $FG$, called unit group of $FG$. In this paper, we   study   the unit groups of group algebra for abelian groups of order $32$. Suppose $V(FG)$ be the normalized unit group, $\omega(G)$ be the augmentation ideal of $G$,  $J(FG)$ is the Jacobson radical of the group algebra and $V = 1+ J(FG)$. It is known   fact that $U(FG) \cong V(FG) \times F^{*}$.
     An element $g\in G$ is called $p$-regular if $(p,o(g)) = 1$, where Char$F$ = $p>0$.
     Now let   $m$ be the L.C.M. of the orders of  all the $p$-regular elements of G and $\xi$ be a primitive $m$-th root of unity. Suppose $T$ is  the multiplicative group consisting of those integers $t$, taken modulo $m$, which gives    $\xi  \rightarrow \xi^{t}$   an  F-automorphism of $F(\xi)$ over $F$. Let $g_1, g_2 \in G$ are two $p$-regular elements. These are  said to be $F$-conjugate if $g_1^{t} = x^{-1}g_2x$, where $x\in G$ and $t\in T$. 
     This defines  an equivalence relation, so we have a partitions of  the $p$-regular elements of $G$  into $p$-regular, $F$-conjugacy classes. Our problem is based on the   Witt-Berman theorem  \cite[Ch.17, Theorem 5.3]{Gkarp2}, which states that the  number of non-isomorphic simple $FG$-modules is equal to the number of $F$-conjugacy classes of $p$-regular elements of $G$.
     Problem  of finding unit groups of group  algebras   generated a considerable interest in recent decade and can be easily seen in \cite{Bhatt,gkarpi1,Mkhan1,Khan2007,NehashaJb2,Sharma2006,Gtang1,Tang2014}. Recently in \cite{Sahai2020,saha2020}, Sahai and Ansari have characterized the unit groups of group algebras for the  abelian  groups of orders  up to 20.
     Let $G$ be a group of order $32$,  we have seven  non-isomorphic abelian groups $C_{32}$, $C_{16}\times C_{2}$, $C_{8}\times C_{4}$, $C_{8}\times C_{2}\times C_{2}$, $C_{4}\times C_{4}\times C_{2}$, $C_{4}\times  C^{3}_{2} $ and  $C^{5}_{2}$. We have completely obtained  the structure  of the unit groups  of the group algebras for all these seven  groups over any finite field of characteristics $p>0$.   
     Here, we denote  $M(n, F)$  the algebra of all $n\times n$ matrices over $F$,
     $GL(n,F)$ is the general linear group of degree $n$ over $F$, $CharF$   the characteristic of $F$,  $C_{n}$ is the cyclic group of order $n$ and $F^{*}= F  \setminus \{0\}$.
     \section{Preliminaries}
     We use the following results frequently throughout  our work. 
     \begin{lemma}  \cite[Proposition 1.2]{rafarz1} \label{l12}
     	The number of simple components of
     	$FG / J(FG)$ is equal to the number of cyclotomic $F$-classes in $G$.
     \end{lemma}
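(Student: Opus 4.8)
The plan is to deduce the statement from the Witt--Berman theorem quoted in the introduction, after reconciling the two pieces of terminology involved. First I would record the standard Wedderburn--Artin reduction: since $F$ is finite, $FG/J(FG)$ is a finite-dimensional semisimple $F$-algebra, so it splits uniquely as a direct product $\prod_{i=1}^{r} M_{n_i}(D_i)$ of simple components, and each factor $M_{n_i}(D_i)$ affords exactly one isomorphism class of simple module. Hence the number $r$ of simple components equals the number of non-isomorphic simple $FG$-modules. By the Witt--Berman theorem this number is the number of $F$-conjugacy classes of $p$-regular elements of $G$, where two $p$-regular elements $g_1,g_2$ are $F$-conjugate precisely when $g_1^{t}=x^{-1}g_2x$ for some $x\in G$ and some $t\in T$, with $T$ the group of exponents for which $\xi\mapsto\xi^{t}$ is an $F$-automorphism of $F(\xi)$.

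The remaining step is to identify these $F$-conjugacy classes with the cyclotomic $F$-classes appearing in the statement. Since $F=\mathbb{F}_q$, the Galois group $\mathrm{Gal}(F(\xi)/F)$ is cyclic, generated by the Frobenius $\xi\mapsto\xi^{q}$, so $T=\langle q\rangle$ modulo $m$; thus $g_1$ and $g_2$ are $F$-conjugate iff some $q$-power of one is conjugate to the other, which is exactly the defining relation of a cyclotomic $F$-class on the $p$-regular elements. With this identification the lemma follows at once.

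To make the argument self-contained, that is, to reprove Witt--Berman rather than merely cite it, I would instead argue by Galois descent at the level of Brauer characters. Extending scalars to a splitting field $K\supseteq F(\xi)$, the simple $FG$-modules correspond bijectively to the orbits of $\Gamma=\mathrm{Gal}(K/F)$ acting on the absolutely simple modules, equivalently on the irreducible Brauer characters $\{\varphi_1,\dots,\varphi_k\}$, whose number $k$ is the number of $p$-regular classes of $G$ by Brauer's theorem. The crucial computation is that the generator $\tau$ of $\Gamma$ lifting $\xi\mapsto\xi^{q}$ acts on Brauer characters by $\tau\cdot\varphi\,(g)=\tau(\varphi(g))=\varphi(g^{q})$, because $\varphi$ is a sum of lifted eigenvalues and $\tau$ raises each root of unity to its $q$-th power. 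Thus $\Gamma$ acts compatibly on the irreducible Brauer characters and on the $p$-regular classes through the power map $g\mapsto g^{q}$.

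The main obstacle is precisely this last compatibility. Once it is established, the Brauer character table is a square, invertible matrix that is $\Gamma$-equivariant, with Frobenius permuting its rows exactly as $g\mapsto g^{q}$ permutes its columns, so Brauer's permutation lemma applies and forces the number of $\Gamma$-orbits on Brauer characters to equal the number of $\Gamma$-orbits on $p$-regular classes; the latter orbits are by definition the cyclotomic $F$-classes, completing the chain of equalities. I expect the delicate part to be the verification that the field automorphism $\tau$ really corresponds to the $q$-power map on class representatives, that is, the eigenvalue-lifting bookkeeping, since the equality of orbit counts, and hence the whole lemma, rests on this equivariance.
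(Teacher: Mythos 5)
The paper offers no proof of this lemma at all: it is imported verbatim as \cite[Proposition 1.2]{rafarz1} (Ferraz), so there is nothing internal to compare your argument against. On its own terms your proposal is correct and is essentially the standard derivation. The first chain — Wedderburn--Artin gives that the number of simple components of $FG/J(FG)$ equals the number of non-isomorphic simple $FG$-modules (since $J(FG)$ kills every simple module and each block $M_{n_i}(D_i)$ carries exactly one), Witt--Berman converts that to the number of $F$-conjugacy classes of $p$-regular elements, and for $F=\mathbb{F}_q$ the group $T$ is precisely $\langle q\rangle \bmod m$ because $\mathrm{Gal}(F(\xi)/F)$ is generated by Frobenius — does identify the $F$-conjugacy classes with Ferraz's cyclotomic $F$-classes, which are by definition the orbits of the $p$-regular conjugacy classes under $C_g\mapsto C_{g^q}$. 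Two small points worth making explicit: Ferraz's Proposition 1.2 is stated for the simple components of the \emph{center} of $FG/J(FG)$, but this is the same count since the center of a product of simple algebras is the corresponding product of fields (and here $G$ is abelian anyway); and over a finite field the $D_i$ are genuinely fields by Wedderburn's little theorem, though the counting argument does not need this. Your second, self-contained route — Galois descent to a splitting field, equivariance of the Brauer character table under $\tau\cdot\varphi(g)=\varphi(g^q)$, and Brauer's permutation lemma to equate orbit counts on characters and on $p$-regular classes — is exactly the textbook proof of Witt--Berman, and you correctly isolate the eigenvalue-lifting equivariance as the only delicate step; note that the bijection between simple $FG$-modules and Galois orbits of absolutely simple modules is clean here precisely because Schur indices over finite fields are trivial. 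In short: correct, and more informative than the paper, which merely cites the result.
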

     \begin{lemma}  \cite[Lemma \ref{l12}]{Creedon2008}  \label{l1}
     	Let $F$ be a finite field of characteristic $p$ with
     	$|F| = q = p^{n}$. Then $U(FC^k_{p}) = C_{p}^{n(p^k -1)} \times C_{p^n-1}$.
     	
     \end{lemma}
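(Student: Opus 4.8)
The plan is to exploit that $G=C_p^{k}$ is a $p$-group over a field of characteristic $p$, which forces $FG$ to be a commutative local ring. First I would recall that the augmentation ideal $\omega(G)$ is nilpotent and coincides with the Jacobson radical $J(FG)$, with $FG/J(FG)\cong F$. Consequently the units of $FG$ are exactly $FG\setminus J(FG)$, and the augmentation map $\varepsilon\colon FG\to F$ (restricting to the identity on the scalar copy of $F^{*}$) gives the splitting $U(FG)\cong V\times F^{*}$, where $V=1+J(FG)$ is the normalized unit group and $F^{*}\cong C_{p^{n}-1}$. Since this splitting is already recorded in the introduction and $FG$ is commutative, the real content is to identify $V$ as an abstract abelian group.

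Next I would compute the order of $V$. Because $\dim_F FG=p^{k}$ and $\dim_F\bigl(FG/J(FG)\bigr)=1$, we have $\dim_F J(FG)=p^{k}-1$, so $|J(FG)|=q^{\,p^{k}-1}=p^{\,n(p^{k}-1)}$, and the bijection $x\mapsto 1+x$ gives $|V|=|1+J(FG)|=p^{\,n(p^{k}-1)}$. As $V$ is a finite abelian $p$-group, it only remains to pin down its exponent.

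The key step is to show that $V$ has exponent $p$, that is, that $V$ is elementary abelian. Writing generators of $C_p^{k}$ as $g_1,\dots,g_k$ and setting $y_i=g_i-1$, I would use the identification $FG\cong F[y_1,\dots,y_k]/(y_1^{p},\dots,y_k^{p})$, under which $J(FG)=(y_1,\dots,y_k)$. For $x\in J(FG)$, the Frobenius endomorphism of the commutative ring $FG$ yields $x^{p}=\sum c_{\alpha}^{p}\,y^{p\alpha}$ over the monomials $y^{\alpha}=y_1^{\alpha_1}\cdots y_k^{\alpha_k}$ occurring in $x$; every such $\alpha$ is nonzero, so some factor $y_i^{p\alpha_i}$ with $\alpha_i\ge 1$ appears, and $y_i^{p}=0$ forces $x^{p}=0$. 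Then for any $u=1+x\in V$ the characteristic-$p$ identity $(1+x)^{p}=1+x^{p}$ gives $u^{p}=1$.

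Combining these facts, $V$ is an abelian $p$-group of order $p^{\,n(p^{k}-1)}$ and exponent $p$, hence $V\cong C_p^{\,n(p^{k}-1)}$, and therefore $U(FG)\cong C_p^{\,n(p^{k}-1)}\times C_{p^{n}-1}$, as claimed. The main obstacle is precisely the vanishing $x^{p}=0$ on the whole radical: it is immediate on the generators $g_i-1$, since $(g_i-1)^{p}=g_i^{p}-1=0$, but extending it to arbitrary $x\in J(FG)$ is exactly what the coordinate description of $FG$ together with the Frobenius homomorphism delivers.
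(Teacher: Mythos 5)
Your argument is correct. Note, however, that the paper offers no proof of this lemma at all: it is quoted verbatim from Creedon (2008), so there is nothing internal to compare against. Your derivation is the standard one and is complete: $FC_p^k$ is a commutative local ring with $J(FG)=\omega(G)$ of $F$-dimension $p^k-1$, the augmentation splits off $F^*\cong C_{p^n-1}$, and the presentation $FG\cong F[y_1,\dots,y_k]/(y_1^p,\dots,y_k^p)$ together with the Frobenius identity $(1+x)^p=1+x^p$ shows that $V=1+J(FG)$ is an elementary abelian $p$-group of order $p^{n(p^k-1)}$. The one point worth stating explicitly is that a finite abelian group of order $p^{m}$ and exponent dividing $p$ is necessarily $C_p^{m}$, which you use implicitly in the last step; with that remark your proof is a valid self-contained replacement for the external citation.
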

     \begin{lemma}   \cite[Lemma 2.3]{NehaShajb1}    \label{l2}
     	Let F be a finite field of characteristic $p $ with
     	$|F| = q = p^n$. Then 
     	
     	\[
     	U(FC_{p^k}) \cong \left.
     	\begin{cases}
     	
     	C_{p}^{n(p -1)} \times C_{p^n-1}    & \text{if} \, \, k = \,1; \\
     	
     	\prod_{s=1}^{k}C^{h_s}_{p^s}\times C_{p^n-1}  , & \text{otherwise},\\

     	\end{cases}  \right.   \]
     	
     	where $h_{k} = n(p-1)$ and $h_{s} = n p^{k-s-1} (p-1)^2 $  for all $s$,  $1\leq s < k.$
     \end{lemma}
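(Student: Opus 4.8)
The plan is to reduce everything to the structure of the group of one-units of a truncated polynomial ring and then extract the cyclic decomposition by a power-counting argument. First I would observe that since $\mathrm{Char}\,F = p$ we have $x^{p^k}-1 = (x-1)^{p^k}$ in $F[x]$, so the substitution $y = x-1$ gives a ring isomorphism $FC_{p^k} \cong F[x]/(x^{p^k}-1) \cong F[y]/(y^{p^k})$. Thus $FC_{p^k}$ is a commutative local ring with maximal ideal (and Jacobson radical) $J = (y)$ and residue field $F$. Consequently $U(FC_{p^k}) = F^* \times V$, where $V = 1 + J$ is the group of one-units, $F^* \cong C_{p^n-1}$, and $|V| = q^{p^k-1} = p^{n(p^k-1)}$. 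Since $V$ is a finite abelian $p$-group, the whole problem becomes the determination of its elementary-divisor multiplicities, i.e. the integers $h_s$ with $V \cong \prod_{s\ge 1} C_{p^s}^{h_s}$.

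The key tool is the Frobenius. In a commutative ring of characteristic $p$ the map $a \mapsto a^{p^j}$ is a ring endomorphism, so for $u = 1+a$ with $a \in J$ we get $u^{p^j} = 1 + a^{p^j}$, and $a \mapsto a^{p^j}$ is additive. Hence $V^{p^j} = 1 + \Phi_j$ where $\Phi_j = \{a^{p^j} : a \in J\}$ is an $\mathbb{F}_p$-subspace of $J$, and $b_j := \log_p |V^{p^j}| = \dim_{\mathbb{F}_p}\Phi_j$. Writing $a = \sum_{i\ge 1} c_i y^i$ and using that $a^{p^j} = \sum_i c_i^{p^j} y^{ip^j}$ modulo $y^{p^k}$, only the terms with $i \le p^{k-j}-1$ survive; since $F$ is perfect the coefficients $c_i^{p^j}$ range over all of $F$. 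Therefore $\Phi_j = \mathrm{span}_F\{ y^{ip^j} : 1 \le i \le p^{k-j}-1\}$ and $b_j = n(p^{k-j}-1)$ for $0 \le j \le k$, while $b_j = 0$ for $j \ge k$. In particular the exponent of $V$ is $p^k$, realized by $1+y$.

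Finally I would recover the multiplicities from the $b_j$. For any finite abelian $p$-group $A = \prod_s C_{p^s}^{h_s}$ one has $A^{p^j} = \prod_{s>j} C_{p^{s-j}}^{h_s}$, whence $\log_p|A^{p^j}| = \sum_{s>j}(s-j)h_s$, and a short telescoping computation gives the inversion formula $h_s = b_{s-1} - 2b_s + b_{s+1}$. Substituting $b_j = n(p^{k-j}-1)$ yields $h_s = np^{k-s-1}(p-1)^2$ for $1 \le s < k$ and $h_k = b_{k-1} = n(p-1)$, which is exactly the asserted decomposition; multiplying by the factor $F^* \cong C_{p^n-1}$ completes the ``otherwise'' case. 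The case $k=1$ is the degenerate instance in which only $s = k = 1$ occurs: here $J^p = 0$ forces $(1+a)^p = 1+a^p = 1$, so $V$ has exponent $p$ and hence $V \cong C_p^{n(p-1)}$, giving $U(FC_p) \cong C_p^{n(p-1)} \times C_{p^n-1}$.

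I expect the main obstacle to be purely bookkeeping rather than conceptual: one must handle the boundary indices $j \in \{k-1,k\}$ and the degenerate $k=1$ case carefully so that the single formula $b_j = n(p^{k-j}-1)$ and the inversion $h_s = b_{s-1}-2b_s+b_{s+1}$ apply uniformly, and one must not overlook the appeal to perfectness of $F$ (surjectivity of Frobenius), which is precisely what makes each $\Phi_j$ a full $F$-span rather than a proper subspace.
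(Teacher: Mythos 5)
The paper does not actually prove this lemma: it is quoted from the cited source and used as a black box, so there is no internal proof to compare against. Your argument is, however, a correct and self-contained proof. The reduction $FC_{p^k}\cong F[x]/(x^{p^k}-1)\cong F[y]/(y^{p^k})$, the splitting $U=C_{p^n-1}\times V$ with $V=1+(y)$, the Frobenius identification $V^{p^j}=1+\Phi_j$ with $\Phi_j=\{a^{p^j}:a\in(y)\}$, the dimension count $b_j=\log_p|V^{p^j}|=n(p^{k-j}-1)$ (which, as you rightly flag, uses surjectivity of $c\mapsto c^{p^j}$ on the finite field $F$), and the second-difference inversion $h_s=b_{s-1}-2b_s+b_{s+1}$ are all sound; substitution gives exactly $h_k=n(p-1)$ and $h_s=np^{k-s-1}(p-1)^2$ for $1\le s<k$, and a consistency check $\sum_s sh_s=b_0=n(p^k-1)$ confirms the bookkeeping. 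Your route is in fact a cleaner, uniform version of what the authors do by hand in the $p=2$ parts of their later theorems, where they pin down the multiplicities by computing sets of the form $\{\gamma\in\omega(G):\gamma^{2}=0\ \text{and}\ \gamma=\beta^{2^{j}}\}$, i.e.\ the square-killed elements inside $V^{2^j}$; you instead compute $|V^{p^j}|$ outright and invert by telescoping, which avoids having to identify those intersections explicitly. One minor presentational remark: your general formula already subsumes $k=1$ (it returns the single factor $C_p^{n(p-1)}$), so the separate degenerate case is not logically necessary, though it mirrors how the lemma is stated.
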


     \begin{lemma}  \cite{CPMSK} \label{l4} 
     	Let $G$ be a group and  $R$ be a commutative ring. Then the set of all finite class sums forms an $R$-basis of $ \zeta(RG)$, the center
     	of $RG$.
     	
     \end{lemma}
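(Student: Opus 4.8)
The plan is to use the standard group-element basis of $RG$ and translate the centrality condition into a condition on coefficients. First I would recall that $\{g : g \in G\}$ is an $R$-basis of $RG$, so every $\alpha \in RG$ has a unique expression $\alpha = \sum_{g \in G} a_g\, g$ with $a_g \in R$ and only finitely many $a_g$ nonzero. Since $R$ is commutative and lies in the center of $RG$, an element $\alpha$ commutes with all of $RG$ precisely when it commutes with every group element $h \in G$; hence $\alpha \in \zeta(RG)$ if and only if $h\alpha h^{-1} = \alpha$ for all $h \in G$.

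The key step is to compute $h\alpha h^{-1} = \sum_{g} a_g\, (hgh^{-1})$ and reindex the sum by $g' = hgh^{-1}$, which gives $h\alpha h^{-1} = \sum_{g'} a_{h^{-1}g'h}\, g'$. Comparing coefficients with $\alpha = \sum_{g'} a_{g'}\, g'$ and invoking the uniqueness of the basis expansion, the centrality condition becomes $a_{g} = a_{h^{-1}gh}$ for all $g, h \in G$. Equivalently, the coefficient function $g \mapsto a_g$ is constant on each conjugacy class of $G$.

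Next I would observe that the finite-support requirement forces only \emph{finite} conjugacy classes to contribute: if $C$ is a conjugacy class on which $\alpha$ has a common nonzero coefficient, then every element of $C$ lies in the finite support of $\alpha$, so $C$ itself must be finite. Consequently $\alpha = \sum_{C} a_C\, \widehat{C}$, where the sum ranges over the finite conjugacy classes $C$, $a_C \in R$ is the common coefficient on $C$, and $\widehat{C} = \sum_{g \in C} g$ is the associated finite class sum; conversely, each such class sum is manifestly central by the coefficient criterion just derived. This establishes that the finite class sums span $\zeta(RG)$ over $R$.

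Finally, linear independence is immediate: distinct conjugacy classes are disjoint subsets of $G$, so distinct class sums involve pairwise disjoint collections of basis elements, whence any $R$-linear relation among them forces every coefficient to vanish. Therefore the finite class sums form an $R$-basis of $\zeta(RG)$. The step I would treat most carefully is the bookkeeping behind the finite-support condition in the possibly infinite-group case, to confirm that infinite conjugacy classes genuinely cannot appear with a nonzero coefficient; in the finite-group setting relevant to this paper this subtlety disappears, since every conjugacy class is automatically finite.
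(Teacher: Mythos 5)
Your proof is correct and complete; it is the standard argument (reduce centrality to conjugation-invariance of the coefficient function, note that finite support rules out infinite classes, and get independence from disjointness of conjugacy classes), which is exactly the proof given in the reference the paper cites. The paper itself states this lemma as a quoted preliminary without proof, so there is nothing to compare against beyond noting that your argument matches the textbook one.
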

     
     \begin{lemma}  \cite{CPMSK} \label{l5}
     	Let $FG$ be a semi-simple group algebra. If $G'$ denotes the
     	commutator subgroup of $G$, then
     	$$  FG = FG_{e_{G'}} \oplus \Delta(G,G')$$
     	where $FG_{e_{G'}} \cong F(G/G')$ is the sum of all commutative simple components of $FG$ and $\Delta(G,G')$ is the sum of all the others.

     \end{lemma}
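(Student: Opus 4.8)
The plan is to realize the decomposition through the central idempotent attached to $G'$ and then match the two summands with the commutative and non-commutative parts of the Wedderburn decomposition. Since $FG$ is semisimple, Maschke's theorem guarantees that $\mathrm{char}\,F$ does not divide $|G|$ (or $\mathrm{char}\,F=0$), so in particular $|G'|$ is invertible in $F$. I would therefore begin by setting $\widehat{G'}=\sum_{h\in G'}h$ and $e_{G'}=|G'|^{-1}\widehat{G'}$, and checking the two routine facts that $e_{G'}$ is idempotent ($e_{G'}^{2}=e_{G'}$) and central (it is fixed under conjugation because $G'\trianglelefteq G$). This yields at once the direct-sum decomposition into two-sided ideals
$$ FG = FGe_{G'}\oplus FG(1-e_{G'}), $$
where $FGe_{G'}=FG_{e_{G'}}$ is the principal ideal generated by the idempotent.

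Next I would identify each summand concretely. The natural surjection $\pi\colon FG\to F(G/G')$ sending $g\mapsto gG'$ satisfies $\pi(e_{G'})=1$, so that $FG(1-e_{G'})\subseteq\ker\pi$; a dimension count ($\dim_F\ker\pi=|G|-|G/G'|=\dim_F FG(1-e_{G'})$, using that $\ker\pi=\Delta(G,G')$ is spanned by the differences $\{h-1:h\in G'\}$) forces equality. Hence $\Delta(G,G')=FG(1-e_{G'})$ and $\pi$ restricts to an isomorphism $FGe_{G'}\cong F(G/G')$. This already delivers the stated splitting $FG=FGe_{G'}\oplus\Delta(G,G')$ together with the claimed identification of the first factor.

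The substantive step, and the one I expect to be the main obstacle, is showing that $FGe_{G'}$ collects \emph{precisely} the commutative simple components. One inclusion is immediate: because $G/G'$ is abelian, $F(G/G')$ is commutative semisimple, hence a product of fields, so every simple component inside $FGe_{G'}$ is commutative. For the converse I would work at the level of central primitive idempotents. If $e_i$ is the central primitive idempotent of a commutative simple component $A_i=FGe_i$, then $g\mapsto ge_i$ maps $G$ into the commutative algebra $A_i$; consequently each commutator satisfies $[g_1,g_2]e_i=e_i$, and since commutators generate $G'$ we get $he_i=e_i$ for every $h\in G'$. Averaging over $G'$ then gives $e_{G'}e_i=|G'|^{-1}\sum_{h\in G'}he_i=e_i$, so $A_i\subseteq FGe_{G'}$. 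Thus every commutative component lies in $FGe_{G'}$, and by complementarity the remaining (non-commutative) simple components are exactly those exhausting $\Delta(G,G')$, which completes the identification and hence the lemma.
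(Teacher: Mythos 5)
Your proof is correct and complete. The paper itself gives no argument for this lemma --- it is quoted verbatim from the reference \cite{CPMSK} --- and what you have written is precisely the standard proof from that source: form the central idempotent $e_{G'}=|G'|^{-1}\sum_{h\in G'}h$, identify $FGe_{G'}\cong F(G/G')$ and $FG(1-e_{G'})=\Delta(G,G')$ via the quotient map and a dimension count, and then show a primitive central idempotent $e_i$ with $FGe_i$ commutative satisfies $e_{G'}e_i=e_i$ because every commutator acts trivially on a commutative component. The only slip is cosmetic: $\Delta(G,G')$ is spanned by $\{g(h-1):g\in G,\ h\in G'\}$ rather than by $\{h-1:h\in G'\}$ alone, but your dimension count $\dim_F\ker\pi=|G|-|G/G'|$ is the correct one and the argument is unaffected.
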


     \section{Main Results}
     
     \begin{theorem} \label{th1}
     	Let $F$ be a finite field of characteristic $p>0$, having  $ q = p^{n}$ elements and $G\cong C_{32}$.   
     	\begin{enumerate}

     		\item If $p=2$. Then,

     		$$  U(FC_{32}) \cong C^{n}_{32}  \times C^{n}_{16}   \times C^{2n}_{8} \times C^{4n}_{4} \times C^{8n}_{2} \times C_{2^n -1}.$$

     		\item If $p\neq 2 $. Then,
     	\end{enumerate}

     	\[
     	U(FC_{32}) \cong \left.
     	\begin{cases}
     	
     	C^{32}_{p^n-1},   & \text{if} \, \, q \, \equiv \, 1\, mod \,\, 32;\\
     	
     	C^{2}_{p^n-1}\times C^{15}_{p^{2n}-1}, & \text{if} \, \, q \, \equiv \, -1 \, mod \,\, 32;\\
     	
     	C^{2}_{p^{8n}-1}  \times C^{2}_{p^{4n}-1} \times C^{3}_{p^{2n}-1} \times C^{2}_{p^n-1}, & \text{if} \, \, q\equiv \, 3,\,  -5, \,  11,\,  -13\, \, mod \, 32;\\
     	
     	C^{2}_{p^{8n}-1}  \times C^{2}_{p^{4n}-1} \times C^{2}_{p^{2n}-1} \times C^{4}_{p^n-1}, & \text{if} \, \, q\equiv \, -3,\,   5, \,  -11,\,  13\, \, mod \, 32;\\
     	
     	C^{2}_{p^n-1}\times C^{4}_{p^{4n}-1} \times  C^{7}_{p^{2n}-1},
     	& \text{if} \, \, q \, \equiv \, 7 \, mod \,\, 32;\\
     	
     	C^{8}_{p^n-1}\times C^{4}_{p^{2n}-1} \times  C^{4}_{p^{4n}-1},
     	& \text{if} \, \, q \, \equiv \, -7 \, mod \,\, 32;\\
     	
     	C^{2}_{p^n-1} \times  C^{15}_{p^{2n}-1},
     	& \text{if} \, \, q \, \equiv \, 15 \, mod \,\, 32;\\
     	
     	C^{16}_{p^n-1} \times  C^{8}_{p^{2n}-1},
     	& \text{if} \, \, q \, \equiv \, -15\, mod \,\, 32.\\
     	
     	\end{cases}  \right.   \]

     \end{theorem}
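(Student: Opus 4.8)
The plan is to treat the modular case $p=2$ and the semisimple case $p\neq 2$ separately. For part (1), where $p=2$, the algebra $FC_{32}=FC_{2^{5}}$ is modular, and the result is a direct application of Lemma \ref{l2} with $p=2$ and $k=5$. First I would substitute these values into the exponent formulas: $h_{5}=n(p-1)=n$, and $h_{s}=np^{k-s-1}(p-1)^{2}=n\,2^{4-s}$ for $1\le s<5$, giving $h_{1}=8n$, $h_{2}=4n$, $h_{3}=2n$, $h_{4}=n$. Reading off $U(FC_{2^{5}})\cong\prod_{s=1}^{5}C_{2^{s}}^{h_{s}}\times C_{2^{n}-1}$ then yields exactly the stated product $C_{32}^{n}\times C_{16}^{n}\times C_{8}^{2n}\times C_{4}^{4n}\times C_{2}^{8n}\times C_{2^{n}-1}$. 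This part requires no further argument.

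For part (2), where $p\neq 2$, we have $\gcd(p,32)=1$, so by Maschke's theorem $FC_{32}$ is semisimple, and being commutative it decomposes as a direct product of finite field extensions of $F$. I would identify $FC_{32}\cong F[x]/(x^{32}-1)$ and factor $x^{32}-1=\prod_{d\mid 32}\Phi_{d}(x)$ with $d$ ranging over $\{1,2,4,8,16,32\}$. The governing fact is that over $\mathbb{F}_{q}$ each cyclotomic polynomial $\Phi_{d}$ splits into $\phi(d)/r_{d}$ distinct irreducible factors, each of degree $r_{d}:=\mathrm{ord}_{d}(q)$, the multiplicative order of $q$ modulo $d$; equivalently, by Lemma \ref{l12}, the simple components are indexed by the cyclotomic $F$-classes of $C_{32}$, which are the orbits of $\mathbb{Z}/32\mathbb{Z}$ under multiplication by $q$. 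Each irreducible factor of degree $r$ contributes a field $\mathbb{F}_{q^{r}}$, whose unit group is cyclic of order $q^{r}-1=p^{nr}-1$. Hence $U(FC_{32})\cong\prod_{d\mid 32}C_{p^{n r_{d}}-1}^{\,\phi(d)/r_{d}}$, and it only remains to evaluate the orders $r_{d}$ in each residue class of $q$ modulo $32$.

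The bulk of the work, and the only genuine obstacle, is the case analysis driven by $q \bmod 32$. Since $(\mathbb{Z}/32\mathbb{Z})^{*}\cong C_{2}\times C_{8}$, the orders $r_{4},r_{8},r_{16},r_{32}$ are controlled by the position of $q$ in this group, and I would compute them class by class. For example, $q\equiv 1\pmod{32}$ gives every $r_{d}=1$, so each $\Phi_{d}$ splits completely and $U(FC_{32})\cong C_{p^{n}-1}^{32}$; while $q\equiv -1\pmod{32}$ gives $r_{d}=1$ for $d\mid 2$ and $r_{d}=2$ for $d\in\{4,8,16,32\}$, with multiplicities $1,1,2,4,8$, yielding $C_{p^{n}-1}^{2}\times C_{p^{2n}-1}^{15}$. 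The remaining classes are handled by the same recipe: one tallies the multiplicities $\phi(d)/r_{d}$ against the degrees $r_{d}$, and as a consistency check at each step the field degrees must satisfy $\sum_{d\mid 32}\phi(d)=1+1+2+4+8+16=32=\dim_{F}FC_{32}$. The arithmetic requiring most care is the determination of $\mathrm{ord}_{16}(q)$ and $\mathrm{ord}_{32}(q)$, since these are precisely the orders that distinguish the classes producing the factors $C_{p^{4n}-1}$ and $C_{p^{8n}-1}$ (the latter arising exactly when $q$ generates the $C_{8}$ part, e.g.\ $q\equiv 3,5\pmod{32}$); no conceptual difficulty arises beyond this, as the field decomposition determines the answer mechanically once the orders are known.
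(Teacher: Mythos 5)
Your proposal is correct, and both parts reach the stated decompositions; I spot-checked the classes $q\equiv 1,\,-1,\,3,\,5 \pmod{32}$ against your recipe and they agree with the theorem. Part (1) is identical to the paper's proof (Lemma \ref{l2} with $p=2$, $k=5$, giving $h_5=n$, $h_4=n$, $h_3=2n$, $h_2=4n$, $h_1=8n$). For part (2) you take a recognizably different route to the same Wedderburn decomposition: the paper invokes Witt--Berman and explicitly enumerates, for each residue class of $q$, the $p$-regular $F$-conjugacy classes of $C_{32}$ (the orbits of $T=\langle q\rangle$ acting on the group by $a\mapsto a^{t}$), reading off each simple component's degree from the orbit size; you instead identify $FC_{32}\cong F[x]/(x^{32}-1)$ and use the cyclotomic factorization, so that each divisor $d$ of $32$ contributes $\phi(d)/r_d$ copies of $\mathbb{F}_{q^{r_d}}$ with $r_d=\mathrm{ord}_d(q)$. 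The two computations are equivalent --- the orbit of an element of order $d$ has size exactly $\mathrm{ord}_d(q)$ --- but your bookkeeping is more mechanical and less error-prone: six orders $r_d$ per residue class replace the paper's partition of all $32$ group elements into orbits, and it makes transparent that only $\mathrm{ord}_{16}(q)$ and $\mathrm{ord}_{32}(q)$ actually separate the eight cases. What the paper's formulation buys in exchange is uniformity with the rest of the article: the $F$-conjugacy-class count $c$ is exhibited directly, and that method carries over unchanged to the non-cyclic abelian groups (and in principle to nonabelian ones), where there is no single polynomial quotient to factor. One trivial slip: in your $q\equiv-1$ case the multiplicity list ``$1,1,2,4,8$'' conflates the $d\mid 2$ contributions with those for $d\in\{4,8,16,32\}$ (which are $1,2,4,8$), but the conclusion $C_{p^n-1}^{2}\times C_{p^{2n}-1}^{15}$ is right.
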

     \begin{proof}
     	The presentation of  $C_{32}$ is given by 
     	$$C_{32} = < a\, \, |\, \, a^{32} = 1>.$$
     	\begin{enumerate}
     		\item If $p=2$, then $|F| = q = 2^{n}$. Since $G \cong  C_{32} \cong C_{2^{5}}$, therefore using Lemma \ref{l2}, we have 
     		$$  U(FC_{32}) \cong C^{n}_{32}  \times C^{n}_{16}   \times C^{2n}_{8} \times C^{4n}_{4} \times C^{8n}_{2} \times C_{2^n -1}.$$

     		\item If  $p \neq 2 $, then  $p$ does not divides $|C_{32}|$, therefore by Maschke's theorem, $FC_{32}$ is semisimple over $F$. Hence by    Wedderburn decomposition theorem  and by Lemma \ref{l5}, we have 
     		$$ FC_{32} \cong  (\bigoplus^{r}_{i=1}  M(n_{i},D_{i}))$$	where for each i, $n_{i}\geq1$  and 	$ D{i}$'s are finite field extensions of $F$. 
     		Since  group is abelian, therefore dimension constraint gives 	$n_{i}=1$, for every $ i.$ It is clear that $C_{32}$ has $32$ conjugacy classes.
     		Now  for any $k\in N$,  $x^{q^{k}}$ = $x$, $\forall x\in \zeta(FC_{32})$ if and only  $\widehat{{C}^{q^t}_{i}} = \widehat{{C}_{i}}$, for all $1\leq i\leq 32$. It   exists if and only if $32|q^k-1$ or $32|q^k+1$. If $D_{i}^* = <y_{i}>$ for all $i$, $1\leq i\leq r$, then $x^{q^{k}} = x$,  $\forall x\in \zeta(FC_{32})$ if and only if $y^{q^{k}}_{i} = 1$, which holds if and only if $[D_{i}:F]|k$, for all $1\leq i\leq r$. Hence the least number $t$ such that $32|q^k-1$ or $32|q^k+1$, 
     		$$t = l.c.m.\{[D_{i}:F] |  1\leq i \leq r \}.$$

     		Therefore  all conjugacy classes of $C_{32}$ are $p$-regular and $m$=32. By    observation we have following possibilities for $q$:
     	\end{enumerate}
     	(a)	If $q\equiv \,1\,  mod\,  32$,    then $t = 1$; 	\\
     	(b) If $q\equiv \,-1\,  mod \, 32$,   then $t = 2 $;  \\
     	(c) If $q\equiv \, 3,\,  -5, \,  11,\,  -13\, \, mod \, 32$;,  then $t = 8;$ \\
     	(d) If $q\equiv \, -3,\,   5, \,  -11,\,   13\, \, mod \, 32$;,  then $t = 8; $ \\
     	(e) If $q \equiv \,  7  \,\,  mod \, \, 32$,  then $t = 4 $; \\
     	(f) If $q \equiv  \, -7  \,\,  mod \, \, 32$,  then $t = 4 $; \\
     	(g) If $q \equiv  \, 15  \,\,  mod \, \, 32$,  then $t = 2 $; \\
     	(h) If $q \equiv  \, -15  \,\,  mod \, \, 32$,  then $t = 2 $.\\
     	
     	Now we will find $T$ and  the number of  $p$-regular $F$-conjugacy classes,  denoted  by $c$. By   Lemma \ref{l4},  $dim_{F}(\zeta(FC_{32})) = 32$, therefore  $\sum_{i=1}^{r} [D_{i} : F] = 32$. We  have   the  following cases:
     	
     	\begin{enumerate}
     		\item If $q\equiv \,1\,  mod\,  32$,    then $T = \{1\}\,mod\,\,32.$ Thus $p$-regular $F$-conjugacy classes are the conjugacy classes of $C_{32} $ and $c$=32.   Hence $FC_{32}\cong  F^{32}.$  
     		
     		\item If  $q\equiv \,-1\,  mod \, 32$,   then $T = \{1,-1\}\,mod\,\,32.$ Thus $p$-regular $F$-conjugacy classes are $\{1\}$,  $\{a^{16}\}$,  $\{a^{\pm i}\}$, $1\leq i \leq 15 $ and  $c$=17. Hence $FC_{32}\cong \,F^{2} \oplus F^{15}_{2}$.
     		
     		\item If $q\equiv \, 3,\,  -5, \,  11,\,  -13\, \, mod \, 32$,  then $T=  \{1, 3, 9, 11,  17, 19, 25, 27\}\,mod\,\,32.$ Thus $p$-regular $F$-conjugacy classes are $\{1\}$, $\{a,  a^{3}, a^{9}, a^{11}, a^{17}, a^{19}, a^{25}, a^{27}\}$,  $\{a^{2}, a^{6}, a^{18}, a^{22}\}$, $\{a^{4}, a^{12}\}$, $\{a^{5},  a^{7}, a^{13}, a^{15}, a^{21}, a^{23}, a^{29}, a^{31}\}$,  $\{a^{8},  a^{24}\}$,\\ $\{a^{10}, a^{14}, a^{26}, a^{30}\}$, $\{a^{16}\}$,  $\{a^{20},  a^{28}\}$ and   $c$=9.	  Hence $FC_{32}\cong   F^{2}_{8}    \oplus  F^{2}_{4}   \oplus F^{3}_{2} \oplus  F^{2}.$
     		
     		\item If $q\equiv \, -3,\,   5, \,  -11,\,   13\, \, mod \, 32$,  then $T=  \{1, 5, 9, 13,  17, 21, 25, 29\}\,mod\,\,32.$ Thus $p$-regular $F$-conjugacy classes are $\{1\}$, $\{a,  a^{5}, a^{9}, a^{13}, a^{17}, a^{21}, a^{25}, a^{29}\}$,  $\{a^{2}, a^{10}, a^{18}, a^{26}\}$, $\{a^{4}, a^{20}\}$, $\{a^{3},  a^{7}, a^{11}, a^{15}, a^{19}, a^{23}, a^{27}, a^{31}\}$,  $\{a^{8}\}$, \\ $\{a^{6}, a^{14}, a^{22}, a^{30}\}$, $\{a^{16}\}$,  $\{a^{24}\}$,  $\{a^{12},  a^{28}\}$ and   $c$=10.	  Hence $FC_{32}\cong   F^{2}_{8}    \oplus  F^{2}_{4}   \oplus F^{2}_{2} \oplus  F^{4}.$
     		
     		\item If $q\equiv 7 \,  mod \, 32 $,    then $T = \{1, 7, 17, 23\}\,mod\,\,32.$ Thus, $p$-regular $F$-conjugacy classes are
     		\{1\},  $\{a, a^{7}, a^{17}, a^{23}\}$, $\{ a^{2}, a^{14}\}$, $\{a^{3}, a^{5}, a^{19}, a^{21}\}$,  $\{ a^{4}, a^{28}\}$, $\{ a^{6}, a^{10}\}$,  $\{ a^{8}, a^{24}\}$, $\{a^{9}, a^{15}, a^{25}, a^{31}\}$, $\{a^{11}, a^{13}, a^{27}, a^{29}\}$, $\{a^{12}, a^{20}\}$, $\{a^{16}\}$,  $\{a^{18}, a^{30}\}$,  $\{a^{22}, a^{26}\}$ and  $c= 13$.
     		Hence $FC_{32}\cong  F^{2} \oplus F^{4}_{4} \oplus F^{7}_{2} .$  
     		
     		\item If $q\equiv -7 \,  mod \, 32 $,    then $T = \{1, 9, 17, 25\}\,mod\,\,32.$ Thus, $p$-regular $F$-conjugacy classes are
     		\{1\},  $\{a, a^{9}, a^{17}, a^{25}\}$, $\{ a^{2}, a^{18}\}$, $\{a^{3}, a^{11}, a^{19}, a^{27}\}$,  $\{ a^{4}\}$,     $\{ a^{6}, a^{22}\}$, $\{a^{5}, a^{13}, a^{21}, a^{29}\}$, $\{a^{7}, a^{15}, a^{23}, a^{31}\}$,  $\{ a^{8}\}$,  $\{a^{10}, a^{26}\}$,  $\{a^{12}\}$, $\{a^{16}\}$,  $\{a^{14}, a^{30}\}$,   $\{ a^{20}\}$,  $\{ a^{24}\}$,  $\{a^{28}\}$ and  $c= 16$.
     		Hence $FC_{32}\cong  F^{8} \oplus F^{4}_{2} \oplus F^{4}_{4} .$  
     		
     		\item If $q\equiv 15\,mod\,32$,  then $T = \{1, 15\}\, mod\,\,32.$ Thus, $p$-regular $F$-conjugacy classes are 	\{1\},
     		$\{ a, a^{15}\}$, $\{ a^{2}, a^{30}\}$, $\{ a^{3}, a^{13}\}$, $\{ a^{4}, a^{28}\}$, $\{ a^{5}, a^{11}\}$, $\{ a^{6}, a^{26}\}$, $\{ a^{7}, a^{9}\}$, $\{ a^{8}, a^{24}\}$, $\{ a^{10}, a^{22}\}$, $\{ a^{12}, a^{20}\}$, $\{ a^{14}, a^{18}\}$, $\{ a^{17}, a^{31}\}$, $\{ a^{19}, a^{29}\}$, $\{ a^{21}, a^{27}\}$, $\{ a^{23}, a^{25}\}$, $\{a^{16}\}$ and  $c$=17.
     		Hence, $FC_{32}\cong   F^{2}\oplus F^{15}_{2}.$  
     		\item If $q\equiv -15\,mod\,32$,  then $T = \{1, 17\}\, mod\,\,32.$ Thus, $p$-regular $F$-conjugacy classes are 	\{1\},
     		$\{ a, a^{17}\}$, $\{ a^{2}\}$, $\{a^{30}\}$, $\{ a^{3}, a^{19}\}$, $\{ a^{4}\}$, $\{a^{28}\}$, $\{ a^{5}, a^{21}\}$, $\{ a^{6}\}$, $\{a^{26}\}$, $\{ a^{7}, a^{23}\}$, $\{ a^{8}\}$, $\{a^{24}\}$,  $\{ a^{9}, a^{25}\}$, $\{ a^{10}\}$, $\{a^{22}\}$, $\{ a^{11}, a^{27}\}$, $\{ a^{13}, a^{29}\}$, $\{ a^{15}, a^{31}\}$,  $\{ a^{12}\}$, $\{a^{20}\}$,  $\{a^{16}\}$, $\{a^{14}\}$, $\{a^{18}\}$ and  $c$=24.
     		Hence, $FC_{32}\cong   F^{16}\oplus F^{8}_{2}.$ 
     		Thus our result follows.
     	\end{enumerate}
     \end{proof}
     \begin{theorem}
     	Let $F$ be a finite field of characteristic $p>0$ having  $q= p^{n}$ elements and $G \cong C_{16} \times C_{2}.$
     	\begin{enumerate}
     		\item If $p=2$. Then,
     		$ U(F[C_{16} \times C_{2}])  \cong  C^{n}_{16}\times C^{n}_{8} \times C^{2n}_{4} \times C^{20n}_{2}  \times C_{2^n -1}. $
     		
     		\item If $p\neq 2$. Then, 
     		
     	\end{enumerate}

     	\[
     	U(F[C_{16} \times C_{2}]) \cong \left.
     	\begin{cases}
     	
     	C^{32}_{p^n-1},    & \text{if} \, \, q \, \equiv \, 1\, mod \,\, 16;\\
     	
     	C^{4}_{p^n-1}\times C^{14}_{p^{2n}-1} , & \text{if} \, \, q \, \equiv \, -1 \, mod \,\, 16;\\
     	
     	C^{4}_{p^n-1}\times C^{6}_{p^{2n}-1}\times C^{4}_{p^{4n}-1} , & \text{if} \, \, q \, \equiv \,3,  -5\, mod \,\, 16;\\
     	
     	C^{8}_{p^n-1}\times C^{4}_{p^{2n}-1}\times C^{4}_{p^{4n}-1} , & \text{if} \, \, q \, \equiv \,-3,   5\, mod \,\, 16;\\
     	
     	C^{4}_{p^{n}-1}\times C^{14}_{p^{2n}-1} , & \text{if} \, \, q \, \equiv \,7 \, mod \,\, 16;\\
     	
     	C^{16}_{p^{n}-1}\times C^{8}_{p^{2n}-1} , & \text{if} \, \, q \, \equiv \,-7 \, mod \,\, 16.\\

     	\end{cases}  \right.   \]

     \end{theorem}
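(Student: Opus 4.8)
The plan is to treat the two characteristic regimes exactly as in Theorem~\ref{th1}, writing $G=\langle a\rangle\times\langle b\rangle$ with $a^{16}=b^{2}=1$.

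For $p=2$ the cyclic Lemma~\ref{l2} no longer applies directly, so I would argue from the radical. Here $FG$ is local, $J=J(FG)$ equals the augmentation ideal, $\dim_{F}J=|G|-1=31$, and $V(FG)=1+J$ is an abelian $2$-group of order $q^{31}=2^{31n}$ with $U(FG)\cong V(FG)\times F^{*}$ and $F^{*}\cong C_{2^{n}-1}$. Identifying $FG\cong F[x,y]/(x^{16},y^{2})$ via $x=a-1,\ y=b-1$ (note $(a-1)^{16}=(b-1)^{2}=0$ in characteristic $2$), the set $\{x^{i}y^{j}:0\le i\le15,\ 0\le j\le1\}\setminus\{1\}$ is an $F$-basis of $J$. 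The key device is the Frobenius identity $(1+z)^{2^{k}}=1+z^{2^{k}}$ for $z\in J$; it shows $V(FG)^{2^{k}}=1+J^{(2^{k})}$, where $J^{(2^{k})}=\{z^{2^{k}}:z\in J\}$ is an $F$-subspace (closed under addition since Frobenius is additive, and under $F$-scaling since $c\mapsto c^{2^{k}}$ is a bijection of $F$). Because $\big(\sum c_{ij}x^{i}y^{j}\big)^{2^{k}}=\sum c_{ij}^{2^{k}}x^{2^{k}i}y^{2^{k}j}$, the dimension $d_{k}:=\dim_{F}J^{(2^{k})}$ equals the number of pairs $(i,j)\neq(0,0)$ with $2^{k}i\le15$ and $2^{k}j\le1$; counting these gives $(d_{0},\dots,d_{4})=(31,7,3,1,0)$, hence $|V(FG)^{2^{k}}|=q^{\,d_{k}}$. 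Writing $V(FG)\cong\prod_{j\ge1}C_{2^{j}}^{m_{j}}$ and using $\sum_{j>k}m_{j}=n(d_{k}-d_{k+1})$, the resulting triangular system yields $m_{1}=20n,\ m_{2}=2n,\ m_{3}=n,\ m_{4}=n$, which is exactly the claimed structure.

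For $p\neq2$ the argument parallels Theorem~\ref{th1}. Since $p\nmid32$, Maschke's theorem makes $FG$ semisimple, and abelianness forces the Wedderburn components to be fields: $FG\cong\bigoplus_{i=1}^{r}D_{i}$ with $\sum_{i}[D_{i}:F]=\dim_{F}\zeta(FG)=32$ by Lemma~\ref{l4}, so $U(FG)\cong\prod_{i}D_{i}^{*}\cong\prod_{i}C_{q^{[D_{i}:F]}-1}$. Every element is $p$-regular and the exponent of $G$ is $16$, so the governing modulus is $m=16$ (this is why the cases are read modulo $16$ rather than $32$). By Lemma~\ref{l12} the number $r$ equals the number of $p$-regular $F$-conjugacy classes, and, as in Theorem~\ref{th1}, a class of size $s$ contributes one component $D_{i}\cong\mathbb{F}_{q^{s}}$, i.e.\ one factor $C_{q^{s}-1}=C_{p^{ns}-1}$.

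It then remains to run the case analysis. For each residue of $q$ I compute $T=\langle q\rangle\le(\mathbb{Z}/16)^{*}$; since conjugation is trivial, the $F$-conjugacy classes are the orbits of $a^{i}b^{j}\mapsto a^{ti}b^{tj}$ for $t\in T$ (indices mod $16$ and mod $2$), and $b$ is always fixed. Tallying orbit sizes (with $\sum_{i}[D_{i}:F]=32$ as a check) reads off each line of the table: for example $q\equiv-1$ and $q\equiv7$ both give $|T|=2$ with the four fixed points $1,a^{8},b,a^{8}b$ and fourteen orbits of size $2$, whence $C_{p^{n}-1}^{4}\times C_{p^{2n}-1}^{14}$. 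The conceptually delicate step is the $p=2$ case, where no off-the-shelf lemma applies and the invariants $m_{j}$ must be extracted from the Frobenius power-map count above; the $p\neq2$ part is routine but bookkeeping-heavy, the only real care being to compute each $T$ correctly and to distinguish the fixed-point patterns of $q\equiv3,-5$ from $q\equiv-3,5$ (the former makes $\{a^{4},a^{12}\}$ a single size-$2$ orbit, the latter fixes both), which produces the two different factorizations listed.
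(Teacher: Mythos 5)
Your proposal is correct, and its $p\neq 2$ half follows the paper's route exactly: Maschke plus Wedderburn with $\sum_{i}[D_i:F]=32$, $m=16$, and for each residue of $q$ an orbit count for $g\mapsto g^{t}$, $t\in T=\langle q\rangle\leq(\mathbb{Z}/16)^{*}$; your tallies (four fixed points $1,a^{8},b,a^{8}b$ and fourteen $2$-orbits for $q\equiv-1,7$; the distinction that $\{a^{4},a^{12}\}$ is a single $2$-orbit for $q\equiv3,-5$ but splits into two fixed points for $q\equiv-3,5$) all agree with the paper's explicit class lists, though in a final write-up you should enumerate the orbits for $q\equiv\pm3,\pm5$ in full rather than summarizing. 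Where you genuinely diverge is the modular case $p=2$. The paper fixes the ansatz $V\cong C_{16}^{l_1}\times C_{8}^{l_2}\times C_{4}^{l_3}\times C_{2}^{l_4}$ and determines the $l_i$ by solving coefficient equations for sets $W_i$ of square-zero elements of $\omega(G)$ that are $2^{j}$-th powers, i.e.\ the $2$-torsion of $V^{2^{j}}$; you instead compute the orders of the subgroups $V^{2^{k}}=1+J^{(2^{k})}$ themselves, via the identification $FG\cong F[x,y]/(x^{16},y^{2})$ and the monomial count $(d_0,\dots,d_4)=(31,7,3,1,0)$, and then invert the triangular system $\sum_{j>k}m_j=n(d_k-d_{k+1})$ to get $(m_1,m_2,m_3,m_4)=(20n,2n,n,n)$. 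The two computations are dual (torsion of the power filtration versus orders of the power filtration), and both yield the stated structure, but yours is the more systematic: the dimensions $d_k$ fall out of a clean count of surviving monomials $x^{2^{k}i}y^{2^{k}j}$ with no ad hoc coefficient chase, the exponent is certified by $d_3\neq0=d_4$, and the method is insulated from the kind of slip that appears in the paper's own intermediate step (the set $W_2$ as literally defined there has $q^{2}$ elements, not $q$, although the paper's final answer is unaffected). I verified your linear algebra: $J^{(2^{k})}$ is indeed an $F$-subspace because the Frobenius is additive and $c\mapsto c^{2^{k}}$ is bijective on $F$, and the identity $\sum_{j>k}m_j=n(d_k-d_{k+1})$ follows from $nd_k=\sum_{j>k}(j-k)m_j$.
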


     \begin{proof}
     	The presentation of  $G\cong C_{16} \times C_{2}$ is given by $$C_{16} \times C_{2} = < a, b\,\,  |\,\,  a^{16} = b^{2} = 1, \,ab = ba>.$$
     	\begin{enumerate}
     		\item If $p=2$, then $FG$ is non-semisimple  and  $|F| = q = 2^n$. It is well known that  $U(FG) \cong V(FG) \times F^{*}$ and $|V(FG)| = 2^{31n}$ as $dim_{F}J(FG) = 31$. Obviously exponent of $V(FG)$ is 16.  Suppose $V(FG) \cong C^{l_{1}}_{16} \times C^{l_{2}}_{8} \times C^{l_{3}}_{4} \times C^{l_{4}}_{2} $ such that $2^{31n} = 16^{l_{1}} \times 8^{l_{2}} \times  4^{l_{3}} \times 2^{l_{4}}$. Now we  will compute $l_1$, $l_{2}$, $l_{3}$ and $l_{4}$. Set \\
     		$ W_{1} =  \big  \{\gamma_{1} \in \omega(G) : \gamma_{1}^2 =0 \, \text{and  there exists} \, \beta \in \omega(G), \,\,\newline \text{such that}\, \gamma_{1}  = \beta^{8}  \big  \}$,
     		$ W_{2} =  \big  \{\gamma_{2} \in \omega(G) : \gamma_{2}^2 =0 \, \text{and  there exists}\newline \, \beta \in \omega(G), \,\, \text{such that}\, \gamma_{2}  = \beta^{4}  \big  \}$ and
     		$ W_{3} =  \big  \{\gamma_{3} \in \omega(G) : \gamma_{3}^2 =0 \, \text{and  there exists} \, \beta \in \omega(G), \,\, \text{such that}\, \gamma_{3}  = \beta^{2}  \big  \}$.
     		Now if $\gamma  = \sum_{j = 0}^{1} \sum_{i = 0}^{15} \alpha_{16j+i} a^{i}b^{j}  \in \omega(G)$, then $ \sum_{i = 0}^{15} \alpha_{2i+j}  = 0$, for $j = 0, 1 $. Also  $ \gamma^{2}  = \sum_{j = 0}^{7} \sum_{i = 0}^{3} \alpha^{2}_{8i+j}a^{2j} $,  $  \gamma^{4}  = \sum_{j = 0}^{3} \sum_{i = 0}^{7} \alpha^{4}_{4i+j} a^{4j}$ and  $\gamma^{8}  = \sum_{j = 0}^{1} \sum_{i = 0}^{15} \alpha^{8}_{2i+j} a^{8j}$.
     		Let  $\beta  = \sum_{j = 0}^{1} \sum_{i = 0}^{15} \beta_{16j+i} a^{i}b^{j}$, such that $\gamma_{1} = \beta^8$. Now applying condition $\gamma_{1}^2 = 0$ and by direct computation we have $\alpha_{i} = 0 $, for all $i \neq 0, 8$ and $\alpha_{0} = \alpha_{8} $. Thus 
     		$  W_{1} = \big  \{  \alpha_{0} (1+a^8), \alpha_{0} \in F \big \}$, $|W_{1}| = 2^{n}$ and $l_{1} =  n$. Similarly, applying the conditions  $\gamma_{2} = \beta^4$ and $\gamma_{2}^2 = 0$, we have $\alpha_{i} = 0 $, for all $i \neq 0, 4$ and $\alpha_{0} = \alpha_{4} $.  Thus  $ W_{2} = \big  \{ \alpha_{0} (1+a^{4}), \alpha_{0} \in F \big \}$, $|W_{2}| = 2^{n}$ and $l_{2} = n.$
     		Again,  applying the conditions  $\gamma_{3} = \beta^8$ and $\gamma_{3}^2 = 0$.  We have $\alpha_{i} = 0 $, for all $i \neq 0, 2, 8, 10  $ and $\alpha_{0} = \alpha_{8} $, $\alpha_{2} = \alpha_{10}$. Thus  $  W_{3} = \big  \{(\alpha_{0} + \alpha_{2}a^{2}) (1+a^{8}), \alpha_{0},  \alpha_{2} \in F \big \}$,  $l_{3} = 2n$ and   $l_{4} = 20n$.
     		Hence $V(FG)  \cong C^{n}_{16}\times C^{n}_{8} \times C^{2n}_{4} \times C^{20n}_{2}$ and   hence the result. 
     		
     		\item If $p\neq 2$, then $|F|= p^{n}$. Using the similar arguments as in Theorem \ref{th1}, $F[C_{16} \times C_{2}]$ is semisimple and we have    $m$=16, $\sum_{i=1}^{r} [D_{i} : F] = 32$. By   observation we have following possibilities for $q$:
     	\end{enumerate}
     	(a)	  If $q \equiv   1 \,\,  mod \, \, 16$,       then $t = 1$;	\\
     	(b)	  If $q \equiv   -1 \,\,  mod \, \, 16$,       then $t = 2$;	\\
     	(c) If $q \equiv   3, \, -5\,  mod \, \, 16$,      then $t =4$;\\
     	(d) If $q \equiv  -3, \, 5 mod \, \, 16$,      then $t =4$;\\
     	(e) If $q \equiv  7 \,\,  mod \, \, 16$,      then $t =2$;\\
     	(f) If $q \equiv  -7 \,\,  mod \, \, 16$,      then $t =2$.\\

     	Hence we have the following cases:
     	
     	\begin{enumerate}
     		\item If $q\equiv \,1\, mod\,16$,  then $T = \{1\}\,mod\,\,16.$  Thus, $p$-regular $F$-conjugacy classes are the conjugacy classes of $C_{16} \times C_{2} $  and  $c$=32.   Hence  $F[C_{16} \times C_{2}] \cong  F^{32}.$  
     		
     		\item If $q\equiv \,-1\,mod\,16$,   then $T = \{1,-1\}\,mod\,\,16.$  Thus, $p$-regular $F$-conjugacy classes are 
     		$\{1\}$, $\{b\}$,  $\{a^{8}\}$, $\{a^{\pm i} \}$, where $1\leq i \leq 7$,  $\{a^{8}b \}$, $\{a^{j}b, a^{-j}b\}$,   where $1\leq j \leq 7$  and  $c$=18.     Hence  $F[C_{16} \times C_{2}] \cong F^{4}\oplus F^{14}_{2}. $
     		\item If $q \equiv   3, \, -5\,  mod \, \, 16$,   then $T = \{1,  3,  9, 11\}\,mod\,\,16.$  Thus,  $p$-regular $F$-conjugacy classes are 
     		$\{1\}$,  $\{b\}$,  $\{a, a^{3}, a^{-7}, a^{-5}\}$, \\$\{a^{-1}, a^{-3}, a^{5}, a^{7}\}$, $\{ a^{2}, a^{6}\}$,   $\{ a^{-2}, a^{-6}\}$,  $\{ a^{\pm4}\}$, $\{ a^{8}\}$, $\{ab, a^{3}b,\newline a^{-7}b, a^{-5}b\}$, $\{a^{-1}b, a^{-3}b, a^{5}b, a^{7}b\}$, $\{ a^{ 2}b, a^{6}b \}$, $\{ a^{-2}b, a^{-6}b\}$, $\{ a^{\pm4}b\}$, $\{ a^{8}b\}$ and   $c$=14 .  Hence  $F[C_{16} \times C_{2}] \cong   F^{6}_{2} \oplus F^{4}_{4}\oplus F^{4}. $
     		\item If $q \equiv   -3, \,  5\,  mod \, \, 16$,   then $T = \{1,  5,  9, 13\}\,mod\,\,16.$  Thus,  $p$-regular $F$-conjugacy classes are 
     		$\{1\}$,  $\{b\}$,  $\{a, a^{5}, a^{-3}, a^{-7}\}$, \\$\{a^{-1}, a^{-5}, a^{3}, a^{7}\}$, $\{ a^{2}, a^{-6}\}$,   $\{ a^{-2}, a^{6}\}$,  $\{ a^{ 4}\}$, $\{ a^{-4}\}$, $\{ a^{8}\}$,\\ $\{ab, a^{5}b, a^{-3}b, a^{-7}b\}$, $\{a^{-1}b, a^{-5}b, a^{3}b, a^{7}b\}$, $\{ a^{2}b, a^{-6}b \}$, $\{ a^{-2}b, a^{6}b\}$, $\{ a^{4}b\}$, $\{ a^{-4}b\}$, $\{ a^{8}b\}$ and   $c$=16 .   Hence  $F[C_{16} \times C_{2}] \cong   F^{4}_{2} \oplus F^{4}_{4}\oplus F^{8}. $

     		\item If $q \equiv   7 \,  mod \, \, 16$,   then $T = \{1,  7\}\,mod\,\,16.$  Thus,  $p$-regular $F$-conjugacy classes are 
     		$\{1\}$,  $\{b\}$,  $\{a, a^{7}\}$, $\{a^{3}, a^{5}\}$, $\{a^{-1}, a^{-7}\}$, $\{a^{-3}, a^{-5}\}$, $\{ a^{\pm2}\}$,   $\{a^{\pm6}\}$,  $\{ a^{\pm4}\}$, $\{ a^{8}\}$,  $\{ab, a^{7}b\}$, $\{a^{3}b, a^{5}b\}$, $\{a^{-1}b, a^{-7}b\}$, $\{a^{-3}b, a^{-5}b\}$, $\{ a^{\pm2}b\}$,   $\{a^{\pm6}b\}$,  $\{ a^{\pm4}b\}$, $\{ a^{8}b\}$ and   $c$=18 .   Hence  $F[C_{16} \times C_{2}] \cong   F^{14}_{2} \oplus F^{4}. $
     		
     		\item If $q \equiv   -7 \,  mod \, \, 16$,   then $T = \{1,  9\}\,mod\,\,16.$  Thus,  $p$-regular $F$-conjugacy classes are 
     		$\{1\}$,  $\{b\}$,  $\{a, a^{-7}\}$, $\{a^{3}, a^{-5}\}$, $\{a^{-1}, a^{7}\}$, $\{a^{-3}, a^{ 5}\}$, $\{ a^{2}\}$,   $\{ a^{-2}\}$,  $\{a^{6}\}$, $\{a^{-6}\}$,  $\{ a^{4}\}$,  $\{ a^{-4}\}$, $\{ a^{8}\}$,  $\{ab, a^{-7}b\}$, $\{a^{3}b, a^{-5}b\}$, $\{a^{-1}b, a^{ 7}b\}$, $\{a^{ -3}b, a^{ 5}b\}$, $\{ a^{2}b\}$,  $\{ a^{-2}b\}$,  $\{a^{6}b\}$, $\{a^{-6}b\}$,  $\{ a^{ 4}b\}$,  $\{ a^{-4}b\}$, $\{ a^{8}b\}$ and   $c$=24 .   Hence  $F[C_{16} \times C_{2}] \cong   F^{8}_{2} \oplus F^{16}. $
     		Thus we have the result.
     	\end{enumerate}
     \end{proof}
     
     \begin{theorem}
     	Let $F$ be a finite field of characteristic $p>0$ having  $q= p^{n}$ elements and $G \cong C_{8} \times C_{4}.$
     	\begin{enumerate}
     		\item If $p=2$. Then,
     		$$ U(F[C_{8} \times C_{4}])  \cong  C^{n}_{8} \times C^{5n}_{4} \times C^{18n}_{2}  \times C_{2^n -1}. $$
     		
     		\item If $p\neq 2$. Then, 
     		
     	\end{enumerate}

     	\[
     	U(F[C_{8} \times C_{4}]) \cong \left.
     	\begin{cases}
     	
     	C^{32}_{p^n-1},     & \text{if} \, \, q \, \equiv \, 1\, mod \,\, 8;\\
     	
     	C^{4}_{p^n-1}\times C^{14}_{p^{2n}-1},    & \text{if} \, \, q \, \equiv \, -1 \, mod \,\, 8;\\
     	
     	C^{4}_{p^n-1}\times C^{14}_{p^{2n}-1},    & \text{if} \, \, q \, \equiv \,3   \, mod \,\, 8;\\
     	
     	C^{16}_{p^n-1}\times C^{8}_{p^{2n}-1},  & \text{if} \, \, q \, \equiv \,-3 \, mod \,\, 8.\\
     	
     	\end{cases}  \right.   \]

     \end{theorem}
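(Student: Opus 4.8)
The plan is to split along the characteristic, following the pattern established in Theorem~\ref{th1} and in the preceding $C_{16}\times C_2$ theorem. Write $G\cong C_8\times C_4=\langle a,b\mid a^8=b^4=1,\ ab=ba\rangle$.

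When $p=2$ the algebra $FG$ is non-semisimple and local, with $J(FG)=\omega(G)$ and $FG/J(FG)\cong F$, so $\dim_F J(FG)=31$ and $U(FG)\cong V(FG)\times F^{*}$ with $|V(FG)|=2^{31n}$. A short check shows the exponent of $V(FG)$ is $8$: writing a general $\gamma\in\omega(G)$ and using the Frobenius identity $\gamma^{2^k}=\sum\alpha_{8j+i}^{2^k}a^{2^k i}b^{2^k j}$, one gets $\gamma^8=(\mathrm{aug}\,\gamma)^8=0$ while $(a-1)^4=a^4-1\neq0$. Hence I may write $V(FG)\cong C_8^{l_1}\times C_4^{l_2}\times C_2^{l_3}$ with $3l_1+2l_2+l_3=31n$, and it remains to determine $l_1$ and $l_2$.

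To isolate the $C_8$- and $C_4$-factors I would introduce, exactly as before, the $F$-subspaces $W_1=\{\gamma\in\omega(G):\gamma^2=0,\ \gamma=\beta^4\text{ for some }\beta\in\omega(G)\}$ and $W_2=\{\gamma\in\omega(G):\gamma^2=0,\ \gamma=\beta^2\text{ for some }\beta\in\omega(G)\}$. Since squaring is additive in characteristic $2$ and Frobenius is bijective on $F$, both are genuine $F$-subspaces, and under $\gamma\mapsto 1+\gamma$ they correspond to $V^4\cap\{x:x^2=1\}$ and $V^2\cap\{x:x^2=1\}$, of orders $2^{l_1}$ and $2^{l_1+l_2}$; thus $l_1=n\dim_F W_1$ and $l_1+l_2=n\dim_F W_2$. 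Direct computation with $\gamma=\sum_{j=0}^{3}\sum_{i=0}^{7}\alpha_{8j+i}a^ib^j$ should show: the fourth-power image collapses onto $\mathrm{span}_F\{1,a^4\}$, and the augmentation together with $\gamma^2=0$ forces $\gamma=\alpha_0(1+a^4)$, giving $\dim_F W_1=1$ and $l_1=n$; while the square image lands in $\mathrm{span}_F\{a^kb^l: k\in\{0,2,4,6\},\ l\in\{0,2\}\}$, on which $\gamma^2=0$ imposes exactly two independent linear relations (with the augmentation relation subsumed), so $\dim_F W_2=6$ and $l_1+l_2=6n$. Then $l_2=5n$ and $l_3=31n-3n-10n=18n$, yielding $V(FG)\cong C_8^{n}\times C_4^{5n}\times C_2^{18n}$ and the claimed result.

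When $p\neq 2$, $p\nmid|G|$, so $FG$ is semisimple by Maschke and, $G$ being abelian, the Wedderburn decomposition is $FG\cong\bigoplus_{i=1}^{r}D_i$ with each $D_i/F$ a field extension and $\sum_i[D_i:F]=32$ by Lemma~\ref{l4}. Every element is $p$-regular and $G$ has exponent $8$, so $m=8$ and $T=\langle q\rangle\le(\mathbb{Z}/8\mathbb{Z})^{*}$. There are four residues, $q\equiv 1,-1,3,-3\pmod 8$, and for each I would record $T$ and count the $p$-regular $F$-conjugacy classes $c$, which equals $r$ by Lemma~\ref{l12}. For $q\equiv1$, $T=\{1\}$, all classes are singletons, $c=32$ and $FG\cong F^{32}$. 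For $q\equiv-1$, $T=\{1,-1\}$ fixes only the four elements of order dividing $2$ (namely $1,a^4,b^2,a^4b^2$), the other $28$ pairing up, so $c=18$ and $FG\cong F^{4}\oplus F^{14}_{2}$. For $q\equiv3$, $T=\{1,3\}$ (as $3^2\equiv1$) fixes exactly the same four elements, again giving $c=18$ and the same splitting. For $q\equiv-3\equiv5$, $T=\{1,5\}$ fixes the $16$ elements with $g^4=1$ (those $a^ib^j$ with $i$ even), the remaining $16$ pairing up, so $c=24$ and $FG\cong F^{16}\oplus F^{8}_{2}$. Converting each simple component of degree $t$ via $D_i^{*}\cong C_{p^{tn}-1}$ produces the stated list.

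The main obstacle I anticipate is the $p=2$ bookkeeping, specifically verifying that $\gamma^2=0$ cuts $W_2$ down to dimension exactly $6$ — that it contributes precisely two independent linear constraints and that augmentation is not an additional one; an off-by-one there would shift $l_2$ and $l_3$. The odd-characteristic cases are routine once $T$ is identified, with the identity $\sum_i[D_i:F]=32$ serving as a consistency check on each class count.
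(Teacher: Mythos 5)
Your proposal is correct and follows essentially the same route as the paper: the subsets $W_1,W_2$ of $\omega(G)$ to pin down $l_1,l_2$ when $p=2$, and the count of $p$-regular $F$-conjugacy classes via $T=\langle q\rangle\bmod 8$ when $p\neq 2$, with identical class lists and Wedderburn components in every case. If anything your bookkeeping for $W_2$ is the more careful one: the correct count is $|W_2|=2^{6n}=2^{l_1+l_2}$, which yields $l_2=5n$ exactly as you argue, whereas the paper writes $|W_2|=2^{5n}$ yet still (correctly) concludes $l_2=5n$.
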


     \begin{proof}
     	The presentation of  $G\cong C_{8} \times C_{4}$ is given by $$C_{8} \times C_{4} = < a, b\,\,  |\,\,  a^{8} = b^{4} = 1, \,ab = ba>.$$
     	\begin{enumerate}
     		\item If $p=2$, then $FG$ is non-semisimple  and  $|F| = q = 2^n$. It is well known that  $U(FG) \cong V(FG) \times F^{*}$ and $|V(FG)| = 2^{31n}$ as $dim_{F}J(FG) = 31$. Obviously exponent of $V(FG)$ is 8.  Suppose $V(FG) \cong   C^{l_{1}}_{8} \times C^{l_{2}}_{4} \times C^{l_{3}}_{2} $ such that  $2^{31n} =   8^{l_{1}} \times  4^{l_{2}} \times 2^{l_{3}}$. Now we  will compute $l_1$, $l_{2}$  and $l_{3}$. Set  
     		$ W_{1} =  \big  \{\alpha  \in \omega(G) : \alpha^{2} =0 \, \text{and  there exists} \, \beta \in \omega(G), \,\, \text{such that}\, \alpha  = \beta^{4}  \big  \},$ 
     		$W_{2} =  \big  \{\gamma  \in \omega(G) : \gamma^{2} =0 \, \text{and  there exists} \, \beta \in \omega(G), \,\, \text{such that}\, \gamma   = \beta^{2}  \big  \}.$ \\
     		If $\alpha  = \sum_{j = 0}^{3} \sum_{i = 0}^{7} \alpha_{8j+i} a^{i}b^{j}  \in \omega(G)$, then $ \sum_{i = 0}^{7} \alpha_{4i+j}  = 0$, for $j = 0, 1, 2, 3 $. 
     		Let  $\beta  = \sum_{j = 0}^{3} \sum_{i = 0}^{7} \beta_{8j+i} a^{i}b^{j}$  such that $\alpha = \beta^4$. Now applying condition $\alpha^{2} = 0$, $\alpha  = \beta^{4}$ and  by direct computation, we have $\alpha_{i} = 0$, for all $i \neq 0, 4$  and $\alpha_{0} = \alpha_{4}$. Thus 
     		$ W_{1} = \big  \{  \alpha_{0} (1+a^4), \alpha_{0} \in F \big \}. $ 
     		Therefore $|W_{1}| = 2^{n}$ and $l_{1} =  n$. 
     		Similarly, applying the conditions $\gamma = \beta^{2}$ , $\gamma^{2} = 0$ and by direct computation,  we have  $|W_{2}| = 2^{5n}$,    $l_{2} = 5n$  and $l_{3} = 18n$.
     		Hence $V(FG)  \cong   C^{n}_{8} \times C^{5n}_{4} \times C^{18n}_{2}$ and hence the result. 
     		
     		\item If $p\neq 2$, then $|F|= p^{n}$. Using the similar arguments as in Theorem \ref{th1}, $F[C_{8} \times C_{4}]$ is semisimple and we have   $m$=8, $\sum_{i=1}^{r} [D_{i} : F] = 32$. By   observation we have following possibilities for $q$:
     	\end{enumerate}	
     	(a)	  If $q \equiv   1 \,\,  mod \, \, 8$,       then $t = 1$;	\\
     	(b)	  If $q \equiv   -1 \,\,  mod \, \, 8$,       then $t = 2$;	\\
     	(c) If $q \equiv   3  \,  mod \, \, 8$,      then $t =2$;\\
     	(d) If $q \equiv  -3  \,   mod \, \, 8$,      then $t =2$.\\

     	Hence we have the following cases:
     	
     	\begin{enumerate}
     		\item If $q\equiv \,1\, mod\,8$,  then $T = \{1\}\,mod\,\,8.$  Thus, $p$-regular $F$-conjugacy classes are the conjugacy classes of $C_{8} \times C_{4} $  and  $c$=32.   Hence  $F[C_{8} \times C_{4}] \cong  F^{32}.$  
     		
     		\item If $q\equiv \,-1\,mod\,8$,   then $T = \{1,-1\}\,mod\,\,8.$  Thus, $p$-regular $F$-conjugacy classes are 	$\{1\}$,  $\{b^{2}\}$, $\{b, b^{3}\}$,  $\{a^{\pm1} \}$,  $\{a^{\pm2} \}$,  $\{a^{\pm3} \}$,   $\{a^{4} \}$, $\{ab, a^{-1}b^{3}\}$,   $\{a^{2}b, a^{-2}b^{3}\}$, $\{a^{3}b, a^{-3}b^{3}\}$, $\{a^{4}b, a^{4}b^{3}\}$, $\{a^{-3}b, a^{3}b^{3}\}$, $\{a^{-2}b, a^{2}b^{3}\}$, $\{a^{-1}b,  ab^{3}\}$, $\{ab^{2}, a^{-1}b^{2}\}$, $\{a^{-2}b^{2}, a^{2}b^{2}\}$, $\{a^{3}b^{2}, a^{-3}b^{2}\}$, $\{a^{4}b^{2}\}$  and  $c$=18.    Hence  $F[C_{8} \times C_{4}] \cong F^{4}\oplus F^{14}_{2}.$
     		\item If $q\equiv \,3\,mod\,8$,   then $T = \{1,3\}\,mod\,\,8.$  Thus, $p$-regular $F$-conjugacy classes are $\{1\}$,  $\{b^{2}\}$, $\{b, b^{3}\}$,  $\{a, a^{3} \}$,  $\{a^{2}, a^{-2} \}$,  $\{a^{-1}, a^{-3} \}$,   $\{a^{4} \}$,  $\{ab, a^{3}b^{3} \}$,  $\{a^{2}b, a^{-2}b^{3} \}$,  $\{a^{-1}b, a^{-3}b^{3}\}$,   $\{a^{4}b, a^{4}b^{3} \}$,  $\{ab^{3}, a^{3}b \}$,  $\{a^{2}b^{3}, a^{-2}b \}$,  $\{a^{-1}b^{3}, a^{-3}b \}$, $\{ab^{2}, a^{3}b^{2} \}$,  $\{a^{2}b^{2}, a^{-2}b^{2} \}$,  $\{a^{-1}b^{2}, a^{-3}b^{2}\}$,   $\{ a^{4}b^{2} \}$  and  $c$=18.    Hence  $F[C_{8} \times C_{4}] \cong F^{4}\oplus F^{14}_{2}. $
     		\item If $q\equiv \,-3\,mod\,8$,   then $T = \{1,5\}\,mod\,\,8.$  Thus, $p$-regular $F$-conjugacy classes are $\{1\}$,  $\{b\}$, $\{b^{2}\}$, $\{b^{3}\}$,  $\{a, a^{-3} \}$,  $\{a^{2}\}$, $\{a^{-2} \}$,   $\{a^{-1}, a^{3}\}$,   $\{a^{4} \}$,  $\{ab, a^{-3}b \}$,  $\{a^{2}b\}$, $\{a^{-2}b \}$,   $\{a^{-1}b, a^{3}b\}$,   $\{a^{4}b \}$,  $\{ab^{2}, a^{-3}b^{2} \}$,  $\{a^{2}b^{2}\}$, $\{a^{-2}b^{2} \}$,   $\{a^{-1}b^{2}, a^{3}b^{2}\}$,   $\{a^{4}b^{2} \}$,  $\{ab^{3}, a^{-3}b^{3} \}$,  $\{a^{2}b^{3}\}$, $\{a^{-2}b^{3} \}$,   $\{a^{-1}b^{3}, a^{3}b^{3}\}$,   $\{a^{4}b^{3} \}$ and  $c$=24.    Hence  $F[C_{8} \times C_{4}] \cong F^{16}\oplus F^{8}_{2}. $\\
     		Thus we have the result.
     	\end{enumerate}
     \end{proof}

     \begin{theorem}
     	Let $F$ be a finite field of characteristic $p>0$ having  $q= p^{n}$ elements and $G \cong C_{8} \times C_{2} \times C_{2}.$
     	\begin{enumerate}
     		\item If $p=2$. Then,
     		$$ U(F[C_{8} \times C_{2} \times C_{2}])  \cong  C^{n}_{8} \times C^{ n}_{4} \times C^{26n}_{2}  \times C_{2^n -1}. $$
     		
     		\item If $p\neq 2$. Then, 
     		
     	\end{enumerate}
     	
     	\[
     	U(F[C_{8} \times C_{2} \times C_{2}]) \cong \left.
     	\begin{cases}
     	
     	C^{32}_{p^n-1},    & \text{if} \, \, q \, \equiv \, 1\, mod \,\, 8;\\
     	
     	C^{8}_{p^n-1}\times C^{12}_{p^{2n}-1},   & \text{if} \, \, q \, \equiv \, -1 \, mod \,\, 8;\\
     	
     	C^{8}_{p^n-1}\times C^{12}_{p^{2n}-1},   & \text{if} \, \, q \, \equiv \,3   \, mod \,\, 8;\\
     	
     	C^{16}_{p^n-1}\times C^{8}_{p^{2n}-1},  & \text{if} \, \, q \, \equiv \,-3 \, mod \,\, 8.\\

     	\end{cases}  \right.   \]

     \end{theorem}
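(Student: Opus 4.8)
I will use the presentation $G\cong C_8\times C_2\times C_2=\langle a,b,c\mid a^8=b^2=c^2=1,\ ab=ba,\ ac=ca,\ bc=cb\rangle$.

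\emph{Case $p=2$.} Here $FG$ is local with $J(FG)=\omega(G)$, so $\dim_F J(FG)=31$, and since $U(FG)\cong V(FG)\times F^{*}$ with $V(FG)=1+J(FG)$ we have $|V(FG)|=2^{31n}$. For $v=1+m$ with $m=\sum_g c_g g\in\omega(G)$ one has $v^{2^k}=1+m^{2^k}$ in characteristic $2$; since $g^{8}=1$ for every $g\in G$ we get $m^{8}=(\operatorname{aug}m)^{8}=0$, so $v^{8}=1$, while $(a-1)^4=a^4+1\neq0$ shows the exponent of $V(FG)$ equals $8$. Hence $V(FG)\cong C_8^{l_3}\times C_4^{l_2}\times C_2^{l_1}$ with $3l_3+2l_2+l_1=31n$, and the whole problem reduces to finding $l_1,l_2,l_3$.

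To pin these down I would count the Frobenius power images, in the same spirit as the sets $W_j$ used in the earlier theorems. Since $v^{2^k}=1+m^{2^k}$, we have $|V(FG)^{2^k}|=|\{m^{2^k}:m\in\omega(G)\}|$, and $m^{2^k}=\sum_g c_g^{2^k}g^{2^k}$ lies in the $F$-span of the subgroup $G^{2^k}=\{g^{2^k}\}$. Because $x\mapsto x^{2^k}$ is a bijection of $F$ and every fibre of $g\mapsto g^{2^k}$ is nonempty, each coefficient of $m^{2^k}$ varies freely over $F$ subject only to the single augmentation relation, so $|V(FG)^{2^k}|=2^{n(|G^{2^k}|-1)}$. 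Now $G^{2}=\langle a^2\rangle\cong C_4$ and $G^{4}=\langle a^4\rangle\cong C_2$, giving $|V(FG)^{2}|=2^{3n}$ and $|V(FG)^{4}|=2^{n}$. Comparing with $|V(FG)^{2^k}|=\prod_{i>k}2^{(i-k)l_i}$ yields $l_3=n$ from fourth powers, then $l_2+2l_3=3n$ so $l_2=n$, and finally $l_1=31n-2n-3n=26n$. This gives $V(FG)\cong C_8^{n}\times C_4^{n}\times C_2^{26n}$, as claimed.

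\emph{Case $p\neq2$.} Since $p\nmid|G|$, Maschke's theorem makes $FG$ semisimple, and abelianness forces all Wedderburn components to be field extensions of $F$, exactly as in Theorem \ref{th1}. The modulus is $m=\exp(G)=8$, every element is $p$-regular, and $T=\langle q\rangle\le(\mathbb Z/8)^{*}=\{1,3,5,7\}$. For odd $t$ one has $(a^ib^jc^k)^t=a^{ti}b^jc^k$, so for each fixed pair $(j,k)$ the $F$-conjugacy class of $a^ib^jc^k$ is governed by the orbit of $i$ under $i\mapsto ti\pmod 8$. I would then enumerate these orbits: for $q\equiv1$, $T=\{1\}$ gives $32$ singleton classes and $FG\cong F^{32}$; for $q\equiv3$ and for $q\equiv-1\equiv7$, the map fixes $0$ and $4$ and has three $2$-cycles on $\{0,\dots,7\}$, giving (over the four choices of $(j,k)$) $8$ classes of size $1$ and $12$ of size $2$, whence $FG\cong F^{8}\oplus F_2^{12}$; and for $q\equiv-3\equiv5$, the map fixes $0,2,4,6$ and has two $2$-cycles, giving $16$ classes of size $1$ and $8$ of size $2$, whence $FG\cong F^{16}\oplus F_2^{8}$ (here $F_2$ denotes the quadratic extension, with $U(F_2)\cong C_{p^{2n}-1}$). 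Taking unit groups componentwise via $U(F_{q^d})\cong C_{p^{dn}-1}$ delivers the four displayed cases.

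The routine but delicate heart of the argument is the orbit bookkeeping in the case $p\neq2$: one must keep $q\equiv3$ and $q\equiv5$ distinct, since multiplication by $3$ and by $5$ on $\mathbb Z/8$ fix different residue sets, and it is precisely this difference that separates the answer $C_{p^n-1}^{8}\times C_{p^{2n}-1}^{12}$ from $C_{p^n-1}^{16}\times C_{p^{2n}-1}^{8}$. In the case $p=2$ the one genuinely substantive point is the identity $|\{m^{2^k}:m\in\omega(G)\}|=2^{n(|G^{2^k}|-1)}$, i.e.\ that the augmentation relation removes exactly one $F$-dimension and imposes nothing further; once the surjectivity of the fibrewise $2^k$-power maps is secured, the three exponents fall out of a single $3\times3$ linear system.
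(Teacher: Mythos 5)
Your proposal is correct, and the modular case $p\neq 2$ follows the paper's route exactly: the paper likewise reduces to the orbits of $i\mapsto ti \pmod 8$ for $t\in T=\langle q\rangle$ (listing the resulting $F$-conjugacy classes explicitly for each of $q\equiv \pm 1,\pm 3 \pmod 8$) and reads off $F^{32}$, $F^{8}\oplus F_2^{12}$, $F^{8}\oplus F_2^{12}$, $F^{16}\oplus F_2^{8}$ respectively. Where you genuinely diverge is the case $p=2$. The paper fixes the invariants $l_1,l_2,l_3$ of $V(FG)\cong C_8^{l_1}\times C_4^{l_2}\times C_2^{l_3}$ by computing the sets $W_j=\{\gamma\in\omega(G):\gamma^2=0,\ \gamma=\beta^{2^j}\ \text{for some}\ \beta\in\omega(G)\}$, i.e.\ the subgroups $V^{2^j}\cap V[2]$, whereas you compute the full orders $|V^{2^k}|=|\{m^{2^k}:m\in\omega(G)\}|=2^{n(|G^{2^k}|-1)}$ and solve the resulting triangular system; both families of invariants determine the decomposition of a finite abelian $2$-group, so either works. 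Your version has two advantages: the identity $|V^{2^k}|=2^{n(|G^{2^k}|-1)}$ is proved once, uniformly, from the bijectivity of Frobenius on $F$ and the surjectivity of $g\mapsto g^{2^k}$ onto $G^{2^k}$ (and your justification of it, including the compatibility of the augmentation constraint with the fibre sums, is sound), and it sidesteps the delicate element-by-element determination of the $W_j$, which is exactly the spot where the paper's write-up is least careful (its displayed $W_2=\{\alpha_0(1+a^2)\}$ does not even satisfy $\gamma^2=0$, since $(1+a^2)^2=1+a^4\neq 0$; the correct set is $\{(\alpha_0+\alpha_1 a^2)(1+a^4)\}$ of size $2^{2n}$, which is what actually forces $l_2=n$). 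The price of your method is that you must know $\dim_F J(FG)=31$ and the exponent of $V$ in advance to close the system, but you supply both ($J(FG)=\omega(G)$ for a $2$-group in characteristic $2$, and $(a-1)^4\neq 0$ while $m^8=0$), so the argument is complete.
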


     \begin{proof}
     	The presentation of  $G\cong C_{8} \times C_{2} \times C_{2}$ is given by $$C_{8} \times C_{2} \times C_{2} = < a, b, c\,\,  |\,\,  a^{8} = b^{2} = c^{2} = 1, \,ab = ba, bc = cb, ac = ca>.$$
     	\begin{enumerate}
     		\item If $p=2$, then $FG$ is non-semisimple   and  $|F| = q = 2^n$. It is well known that  $U(FG) \cong V(FG) \times F^{*}$ and $|V(FG)| = 2^{31n}$ as $dim_{F}J(FG) = 31$. Obviously exponent of $V(FG)$ is 8.  Suppose $V(FG) \cong   C^{l_{1}}_{8} \times C^{l_{2}}_{4} \times C^{l_{3}}_{2} $ such that  $2^{31n} =   8^{l_{1}} \times  4^{l_{2}} \times 2^{l_{3}}$. Now we  will compute $l_1$, $l_{2}$  and $l_{3}$. Set  
     		$ W_{1} =  \big  \{\alpha  \in \omega(G) : \alpha^{2} =0 \, \text{and  there exists} \, \beta \in \omega(G), \,\, \text{such that}\, \alpha  = \beta^{4}  \big  \},$  
     		$W_{2} =  \big  \{\gamma  \in \omega(G) : \gamma^{2} =0 \, \text{and  there exists} \, \beta \in \omega(G), \,\, \text{such that}\, \gamma   = \beta^{2}  \big  \}.$ \\
     		Let $\alpha  =  \sum_{k = 0}^{1} \sum_{j = 0}^{1} \sum_{i = 0}^{7} \alpha_{8 (j+2k)+i} a^{i}b^{j}c^{k}  \in \omega(G)$ and \\ $\beta  = \sum_{k = 0}^{1} \sum_{j = 0}^{1} \sum_{i = 0}^{7} \beta_{8 (j+2k)+i} a^{i}b^{j}c^{k}$  such that $\alpha = \beta^4$. Now applying the  conditions $\alpha^{2} = 0$, $\alpha  = \beta^{4}$ and  by direct computation, we have $\alpha_{i} = 0$, for all $i \neq 0, 4$  and $\alpha_{0} = \alpha_{4}$. Thus 
     		$ W_{1} = \big  \{  \alpha_{0} (1+a^4), \alpha_{0} \in F \big \}. $ 
     		Therefore $|W_{1}| = 2^{n}$ and $l_{1} =  n$. 
     		Similarly, applying the conditions $\gamma = \beta^{2}$ , $\gamma^{2} = 0$ and by direct computation,  we have $\alpha_{i} = 0$, for all $i \neq 0, 2$ and  $\alpha_{0} = \alpha_{2}$.  Thus 
     		$  W_{2} = \big  \{ \alpha_{0}(1+a^2), \alpha_{0} \in F \big \}.$  
     		Therefore $|W_{2}| = 2^{n}$,    $l_{2} =  n$  and $l_{3} = 26n$.
     		Hence $V(FG)  \cong   C^{n}_{8} \times C^{n}_{4} \times C^{26n}_{2}$ and hence  the result follows. 
     		
     		\item If $p\neq 2$, then $|F|= p^{n}$. Using the similar arguments as in Theorem \ref{th1}, $F[C_{8} \times C_{2} \times C_{2}]$ is semisimple   and $m$=8, $\sum_{i=1}^{r} [D_{i} : F] = 32$. Here the number of  $p$-regular $F$-conjugacy classes,  denoted  by $w$. By   observation we have following possibilities for $q$:
     	\end{enumerate}
     	(a)	  If $q \equiv   1 \,\,  mod \, \, 8$,       then $t = 1$;	\\
    	(b)	  If $q \equiv   -1 \,\,  mod \, \, 8$,       then $t = 2$;	\\
     	(c) If $q \equiv   3  \,  mod \, \, 8$,      then $t =2$;\\
     	(d) If $q \equiv  -3  \,   mod \, \, 8$,      then $t =2$.\\
     	Now we have the cases:
     	\begin{enumerate}
     		\item If $q\equiv \,1\, mod\,8$,  then $T = \{1\}\,mod\,\,8.$  Thus, $p$-regular $F$-conjugacy classes are the conjugacy classes of $C_{8} \times C_{2} \times C_{2} $  and  $w$=32.   Hence  $F[C_{8} \times C_{2} \times C_{2}] \cong  F^{32}.$  
     		
     		\item If $q\equiv \,-1\,mod\,8$,   then $T = \{1, 7\}\,mod\,\,8.$  Thus, $p$-regular $F$-conjugacy classes are 	$\{1\}$,  $\{a, a^{7} \}$, $\{a^{2}, a^{6} \}$, $\{a^{3}, a^{5} \}$, $\{a^{4}\}$, $\{b\}$, $\{c\}$, $\{ab, a^{7}b \}$, $\{a^{2}b, a^{6}b \}$, $\{a^{3}b, a^{5}b \}$, $\{a^{4}b \}$, $\{ac, a^{7}c \}$, $\{a^{2}c, a^{6}c \}$, $\{a^{3}c, a^{5}c\}$, $\{a^{4}c\}$, $\{bc \}$, $\{abc, a^{7}bc \}$, $\{a^{2}bc, a^{6}bc \}$, $\{a^{3}bc, a^{5}bc\}$, $\{a^{4}bc\}$  and  $w$=20.    Hence  $F[C_{8} \times C_{2} \times C_{2}] \cong F^{8}\oplus F^{12}_{2}.$
     		\item If $q\equiv \,3\,mod\,8$,   then $T = \{1,3\}\,mod\,\,8.$  Thus, $p$-regular $F$-conjugacy classes are	$\{1\}$,  $\{a, a^{3} \}$, $\{a^{2}, a^{6} \}$, $\{a^{5}, a^{7} \}$, $\{a^{4}\}$, $\{b\}$, $\{c\}$, $\{ab, a^{3}b \}$, $\{a^{2}b, a^{6}b \}$, $\{a^{5}b, a^{7}b \}$, $\{a^{4}b \}$, $\{ac, a^{3}c \}$, $\{a^{2}c, a^{6}c \}$, $\{a^{5}c, a^{7}c\}$, $\{a^{4}c\}$, $\{bc \}$, $\{abc, a^{3}bc \}$, $\{a^{2}bc, a^{6}bc \}$, $\{a^{5}bc, a^{7}bc\}$, $\{a^{4}bc\}$ and  $w$=20.   Hence  $F[C_{8} \times C_{2} \times C_{2}] \cong F^{8}\oplus F^{12}_{2}. $
     		\item If $q\equiv \,-3\,mod\,8$,   then $T = \{1,5\}\,mod\,\,8.$  Thus, $p$-regular $F$-conjugacy classes are 	$\{1\}$,  $\{a, a^{5} \}$, $\{a^{2} \}$, $\{a^{6} \}$, $\{a^{3}, a^{7} \}$, $\{a^{4}\}$, $\{b\}$, $\{c\}$, $\{ab, a^{5}b \}$, $\{a^{2}b\}$, $\{ a^{6}b \}$, $\{a^{3}b, a^{7}b \}$, $\{a^{4}b \}$, $\{ac, a^{5}c \}$, $\{a^{2}c\}$, $\{a^{6}c \}$, $\{a^{3}c, a^{7}c\}$, $\{a^{4}c\}$, $\{bc\}$, $\{abc, a^{5}bc \}$, $\{a^{2}bc\}$, $\{a^{6}bc \}$, $\{a^{3}bc, a^{7}bc\}$, $\{a^{4}bc\}$ and  $w$=24.    Hence  $F[C_{8} \times C_{2} \times C_{2}] \cong F^{16}\oplus F^{8}_{2}. $\\
     		Thus we have the result.
     	\end{enumerate}
     \end{proof}

     \begin{theorem}
     	Let $F$ be a finite field of characteristic $p>0$ having  $q= p^{n}$ elements and $G \cong C_{4}^{2}  \times C_{2}.$
     	\begin{enumerate}
     		\item If $p=2$. Then,
     		$$ U(F[C_{4}^{2}\times C_{2}])  \cong  C^{3n}_{4}   \times C^{25n}_{2}  \times C_{2^n -1}. $$
     		
     		\item If $p\neq 2$. Then, 
     		
     	\end{enumerate}
    	\[
     	U(F[C_{4}^{2} \times C_{2}]) \cong \left.
     	\begin{cases}
     	
     	C^{32}_{p^n-1},    & \text{if} \, \, q \, \equiv \, 1\, mod \,\, 4;\\
     	
     	C^{8}_{p^n-1}\times C^{12}_{p^{2n}-1},   & \text{if} \, \, q \, \equiv \, -1 \, mod \,\, 4.\\
     
     	\end{cases}  \right.   \]
     	
     \end{theorem}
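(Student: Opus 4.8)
The plan is to treat the two characteristics separately, following the template of Theorem~\ref{th1} and the three theorems preceding this one. I will use the presentation
\[
C_4^2 \times C_2 = \langle a, b, c \mid a^4 = b^4 = c^2 = 1,\ ab=ba,\ ac=ca,\ bc=cb \rangle .
\]
For \emph{$p=2$} the algebra $FG$ is non-semisimple with $|F|=2^n$; since $G$ is a $2$-group, $J(FG)=\omega(G)$ and $\dim_F J(FG)=|G|-1=31$, so $|V(FG)|=2^{31n}$. Because $G$ has exponent $4$ and $(1+x)^{2^k}=1+x^{2^k}$ in characteristic $2$, the squaring map carries $\omega(G)$ into the span of $\{g^2\}$, whence $x^4=0$ for every $x\in\omega(G)$; thus $\exp V(FG)=4$ and I may write $V(FG)\cong C_4^{l_1}\times C_2^{l_2}$ with $2l_1+l_2=31n$. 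It therefore suffices to compute the single parameter $l_1$.

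To obtain $l_1$ I will imitate the earlier proofs and study
\[
W_1=\{\gamma\in\omega(G): \gamma^2=0 \text{ and } \gamma=\beta^2 \text{ for some } \beta\in\omega(G)\},
\]
for which $|W_1|=2^{l_1}$. The crux is to identify $W_1$ explicitly. Writing $\beta=\sum\beta_{ijk}a^ib^jc^k$, in characteristic $2$ one has $\beta^2=\sum\beta_{ijk}^2\,a^{2i}b^{2j}$, so every square lies in the subalgebra $F[\langle a^2,b^2\rangle]\cong F[C_2\times C_2]$. Since the Frobenius $x\mapsto x^2$ is a bijection on $F$ and each of the four elements of $\langle a^2,b^2\rangle$ has exactly $|G|/4=8$ square roots in $G$, the squaring map sends $\omega(G)$ onto $\omega(\langle a^2,b^2\rangle)$; moreover every $\gamma$ in this image satisfies $\gamma^2=\varepsilon(\gamma)^2=0$, where $\varepsilon(\gamma)$ denotes the augmentation (sum of coefficients) of $\gamma$. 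Hence $W_1=\omega(\langle a^2,b^2\rangle)$, giving $|W_1|=2^{3n}$, so $l_1=3n$ and $l_2=25n$, and therefore $U(FG)\cong C_4^{3n}\times C_2^{25n}\times C_{2^n-1}$.

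For \emph{$p\neq 2$} we have $p\nmid 32$, so by Maschke's theorem $FG$ is semisimple, and since $G$ is abelian the Wedderburn decomposition reads $FG\cong\bigoplus_i F_{d_i}$ with each $F_{d_i}/F$ a field extension; by Lemma~\ref{l4}, $\dim_F\zeta(FG)=32$ forces $\sum_i d_i=32$. All elements are $p$-regular and $\exp(G)=4$, so $m=4$ and only $q\equiv\pm1\pmod 4$ occur. As in Theorem~\ref{th1} I will determine $T$ and count the $p$-regular $F$-conjugacy classes. When $q\equiv 1\pmod 4$, $T=\{1\}$, all $32$ classes are singletons of degree $1$, and $FG\cong F^{32}$. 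When $q\equiv -1\pmod 4$, $T=\{1,-1\}$; the $8$ self-inverse elements (those with $i,j\in\{0,2\}$) give $8$ singleton classes of degree $1$, while the remaining $24$ elements of order $4$ pair into $12$ classes, each with associated field of degree $2$ since the order of $q$ modulo $4$ equals $2$. Thus $FG\cong F^{8}\oplus F_{2}^{12}$, and passing to unit groups of the field factors yields the two displayed isomorphisms.

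The main obstacle is the $p=2$ determination of $l_1$: everything hinges on proving that the squaring map sends $\omega(G)$ onto $\omega(\langle a^2,b^2\rangle)$ and that $W_1$ coincides with this ideal, which is exactly where Frobenius-surjectivity and the square-root count $|G|/|\langle a^2,b^2\rangle|=8$ enter. By contrast, the $p\neq 2$ case is routine once the eight self-inverse elements are counted correctly and the extension degree $2$ is read off from the order of $q$ modulo $4$.
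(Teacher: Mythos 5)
Your proof is correct and follows essentially the same route as the paper: for $p=2$ it bounds the exponent of $V(FG)$ by $4$ and computes $l_1$ from the set $W$ of square-zero squares in $\omega(G)$, and for $p\neq 2$ it uses Maschke/Wedderburn together with a count of the $F$-conjugacy classes for $T=\{1\}$ and $T=\{1,-1\}$. The differences are only in execution --- you identify $W$ structurally as $\omega(\langle a^2,b^2\rangle)$ where the paper works coefficientwise, and you count the eight self-inverse elements rather than listing all twenty classes --- and both routes yield $l_1=3n$, $l_2=25n$ and $FG\cong F^{8}\oplus F_{2}^{12}$.
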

     
     \begin{proof}
     	The presentation of  $G\cong C_{4} \times C_{4} \times C_{2}$ is given by $$C_{4}^{2} \times C_{2} = < a, b, c\,\,  |\,\,  a^{4} = b^{4} = c^{2} = 1, \,ab = ba, bc = cb, ac = ca>.$$
     	\begin{enumerate}
     		\item If $p=2$, then $FG$ is non-semisimple  and  $|F| = q = 2^n$. It is well known that  $U(FG) \cong V(FG) \times F^{*}$ and $|V(FG)| = 2^{31n}$ as $dim_{F}J(FG) = 31$. Obviously exponent of $V(FG)$ is 4.  Suppose $V(FG) \cong   C^{l_{1}}_{4} \times C^{l_{2}}_{2}$ such that  $2^{31n} =   4^{l_{1}} \times  2^{l_{2}}$. Now we  will compute $l_1$  and $l_{2}$. Set  
     		$  W  =  \big  \{\alpha  \in \omega(G) : \alpha^{2} =0 \, \text{and  there exists} \, \beta \in \omega(G), \,\, \text{such that}\, \alpha  = \beta^{2}  \big  \}.$  
 If  $\alpha  =  \sum_{k = 0}^{1} \sum_{j = 0}^{3} \sum_{i = 0}^{3} \alpha_{4(j+4k)+i} a^{i}b^{j}c^{k}  \in \omega(G)$, then $\sum_{i = 0}^{3} \alpha_{2(j+2k)+i} = 0$, for $j = 0, 1, 2, 3$  and  $k = 0, 1$. Let $\beta  = \sum_{k = 0}^{1} \sum_{j = 0}^{3} \sum_{i = 0}^{3} \beta_{4(j+4k)+i} a^{i}b^{j}c^{k}$  such that $\alpha = \beta^2$. Now applying the  conditions $\alpha^{2} = 0$, $\alpha  = \beta^{2}$ and  by direct computation, we have $\alpha_{i} = 0$, for all $i \neq 0, 2, 8, 10 $  and $\alpha_{0} = \alpha_{2}$.\ Thus
     		$ W  = \big  \{  \alpha_{0} (1+a^2) + (\alpha_{8} + \alpha_{10}a^{2})b^{2}, \alpha_{0}, \alpha_{8}, \alpha_{10} \in F \big \}. $ 
     		Therefore $|W| = 2^{3n}$, $l_{1} =  3n$ and  $l_{2} = 25n$.  
     		Hence $V(FG)  \cong   C^{3n}_{4}  \times C^{25n}_{2}$ and  the result follows. 
     		
     		\item If $p\neq 2$, then $|F|= p^{n}$. Using the similar arguments as in Theorem \ref{th1}, $F[C_{4} \times C_{4} \times C_{2}]$ is semisimple and   $m$=4, $\sum_{i=1}^{r} [D_{i} : F] = 32$ . By   observation we have following possibilities for $q$:
     	\end{enumerate}
     	(a)	  If $q \equiv   1 \,\,  mod \, \, 4$,       then $t = 1$;	\\
     	(b)	  If $q \equiv   -1 \,\,  mod \, \, 4$,       then $t = 2$.	\\
     
     	Now we have the cases:
     	\begin{enumerate}
     		\item If $q\equiv \,1\, mod\,4$,  then $T = \{1\}\,mod\,\,4.$  Thus, $p$-regular $F$-conjugacy classes are the conjugacy classes of $C_{4} \times C_{4} \times C_{2} $  and  $w$=32.   Hence  $F[C_{4} \times C_{4} \times C_{2}] \cong  F^{32}.$  
     		
     		\item If $q\equiv \,-1\,mod\,4$,   then $T = \{1, 3\}\,mod\,\,4.$  Thus, $p$-regular $F$-conjugacy classes are 	$\{1\}$,  $\{a, a^{3}\}$, $\{a^{2}  \}$, $\{b, b^{3} \}$, $\{ b^{2} \}$, $\{c\}$, $\{ab, a^{3}b^{3}\}$, $\{ab^{2}, a^{3}b^{2}\}$, $\{ab^{3}, a^{3}b \}$, $\{a^{2}b, a^{2}b^{3} \}$,  $\{ a^{2}b^{2} \}$, $\{bc, b^{3}c\}$, $\{ b^{2}c \}$, $\{abc, a^{3}b^{3}c \}$, $\{ab^{2}c, a^{3}b^{2}c \}$, $\{ab^{3}c, a^{3}bc \}$, $\{a^{2}bc, a^{2}b^{3}c \}$, $\{ a^{2}b^{2}c \}$, $\{ac, a^{3}c \}$, $\{  a^{2}c \}$  and  $w$=20.    Hence  $F[C_{4} \times C_{4} \times C_{2}] \cong F^{8}\oplus F^{12}_{2}.$
     		
     		Thus we have the result.
     	\end{enumerate}
     \end{proof}
 
     \begin{theorem}
     	Let $F$ be a finite field of characteristic $p>0$ having  $q= p^{n}$ elements and $G \cong C_{4} \times C^{3}_{2}.$
     	\begin{enumerate}
     		\item If $p=2$. Then,
     		$$ U(F[C_{4} \times C^{3}_{2}])  \cong   C^{n}_{4}  \times C^{29n}_{2}  \times C_{2^n -1}. $$
     		
     		\item If $p\neq 2$. Then, 
     		
     	\end{enumerate}
     	
     	\[
     	U(F[C_{4} \times  C^{3}_{2}]) \cong \left.
     	\begin{cases}
     	
     	C^{32}_{p^n-1},    & \text{if} \, \, q \, \equiv \, 1\, mod \,\, 4;\\
     	
     	C^{16}_{p^n-1}\times C^{8}_{p^{2n}-1},   & \text{if} \, \, q \, \equiv \, -1 \, mod \,\, 4.\\

     	\end{cases}  \right.   \]

     \end{theorem}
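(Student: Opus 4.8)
The plan is to split the argument according to whether $p=2$, exactly mirroring the four preceding theorems. I would first fix the presentation
$$C_4\times C_2^3=\langle a,b,c,d \mid a^4=b^2=c^2=d^2=1,\ \text{all generators commuting}\rangle,$$
so that $G$ has exponent $4$ and $\dim_F\omega(G)=31$. It is convenient to identify $FG$ with $F[x,y,z,w]/(x^4,y^2,z^2,w^2)$ via $x=a-1,\ y=b-1,\ z=c-1,\ w=d-1$, which makes the modular computation transparent.

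For $p=2$ the algebra $FG$ is non-semisimple with $J(FG)=\omega(G)$, and $U(FG)\cong V(FG)\times F^{*}$ with $|V(FG)|=2^{31n}$. Since the exponent of $G$ is $4$, the exponent of $V(FG)$ is $4$, so $V(FG)\cong C_4^{l_1}\times C_2^{l_2}$ with $2l_1+l_2=31n$, and it remains only to find $l_1$. Following the earlier theorems I would introduce
$$W=\bigl\{\alpha\in\omega(G):\alpha^2=0\ \text{and}\ \alpha=\beta^2\ \text{for some}\ \beta\in\omega(G)\bigr\}.$$
In characteristic $2$ one has $(1+\beta)^2=1+\beta^2$, so $1+W=V(FG)^2\cong C_2^{l_1}$ and hence $|W|=2^{l_1}$; the condition $\alpha^2=0$ is automatic because $\beta^4=\sum c_g^4\,g^4=0$ for every $\beta\in\omega(G)$ in the quotient above. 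Using the Frobenius identity $\beta^2=\sum c_g^2 g^2$ together with $b^2=c^2=d^2=1$ and $a^4=1$, every surviving square reduces to a multiple of $a^2$, so $W=\{\alpha_0(1+a^2):\alpha_0\in F\}$, giving $|W|=2^{n}$, $l_1=n$, and therefore $l_2=29n$. This yields $V(FG)\cong C_4^{n}\times C_2^{29n}$ and the claimed structure.

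For $p\neq2$, Maschke's theorem gives semisimplicity, and since $G$ is abelian the Wedderburn components are fields $D_i$ with $\sum_i[D_i:F]=32$ by Lemma \ref{l4} and Lemma \ref{l5}; as in Theorem \ref{th1} all elements are $p$-regular and $m=4$. The group $T$ is generated by $q\bmod 4$, so $T=\{1\}$ when $q\equiv1$ and $T=\{1,3\}=\{\pm1\}$ when $q\equiv-1$. In the first case every class is a singleton $F$-class, giving $32$ components equal to $F$ and $U(FG)\cong C_{p^n-1}^{32}$. In the second case the $F$-classes are the orbits of $g\mapsto g^{-1}$: the $16$ elements whose $a$-exponent is $0$ or $2$ are inversion-fixed, while the $16$ elements with $a$-exponent $1$ or $3$ pair into $8$ two-element classes, for a total of $24$ classes. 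The $16$ singletons contribute the field $F$ and the $8$ pairs contribute $F(\xi)\cong\mathbb{F}_{q^2}$, so $FG\cong F^{16}\oplus \mathbb{F}_{q^2}^{\,8}$ and $U(FG)\cong C_{p^n-1}^{16}\times C_{p^{2n}-1}^{8}$.

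The only genuine work is bookkeeping. In the modular case one must verify that no monomial other than $a^2$ survives the squaring map inside $\omega(G)$, which the relations $y^2=z^2=w^2=0$ and $x^4=0$ guarantee. In the case $q\equiv-1$ one must partition all $32$ elements correctly under inversion; the main risk is an off-by-one count in the enumeration, which I would cross-check against the dimension identity $\sum_i[D_i:F]=32$ and against the count of $24$ simple components predicted by Lemma \ref{l12}.
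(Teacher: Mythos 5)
Your proposal is correct and follows essentially the same route as the paper: the case $p=2$ via $V(FG)\cong C_4^{l_1}\times C_2^{l_2}$ and the set $W$ of squares in $\omega(G)$ (your identification $1+W=V(FG)^2$ just makes explicit why $|W|=2^{l_1}$), and the case $p\neq 2$ via Maschke, Wedderburn, and the $F$-conjugacy classes for $T=\{1\}$ or $T=\{\pm 1\}\bmod 4$. Your counts ($l_1=n$, $l_2=29n$; $24$ classes splitting as $16$ singletons and $8$ pairs) agree with the paper's.
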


     \begin{proof}
     	The presentation of  $G\cong C_{4} \times  C^{3}_{2}$ is given by $$C_{4} \times C^{3}_{2}   = < a, b, c, d \,\,  |\,\,  a^{4} = b^{2} = c^{2}= d^{2} = 1, \,ab = ba, bc = cb, dc = cd,  ad = da>.$$
     	\begin{enumerate}
     		\item If $p=2$, then $FG$ is non-semisimple   and  $|F| = q = 2^n$. It is well known that  $U(FG) \cong V(FG) \times F^{*}$ and $|V(FG)| = 2^{31n}$ as $dim_{F}J(FG) = 31$. Obviously exponent of $V(FG)$ is 4.  Suppose $V(FG) \cong   C^{l_{1}}_{4} \times C^{l_{2}}_{2}$ such that  $2^{31n} =   4^{l_{1}} \times  2^{l_{2}}$. Now we  will compute $l_1$ and $l_{2}$. Set  
     		$$  W  =  \big  \{\alpha  \in \omega(G) : \alpha^{2} =0 \, \text{and  there exists} \, \beta \in \omega(G), \,\, \text{such that}\, \alpha  = \beta^{2}  \big  \}.$$ 
     		Let $\alpha  = \sum_{s = 0}^{1} \sum_{k = 0}^{1} \sum_{j = 0}^{1} \sum_{i = 0}^{3} \alpha_{4(j+2(k+2s))+i} a^{i}b^{j}c^{k}d^{s}  \in \omega(G)$ and  $\beta  =  \sum_{s = 0}^{1}\sum_{k = 0}^{1} \sum_{j = 0}^{1} \sum_{i = 0}^{3} \beta_{4(j+2(k+2s))+i} a^{i}b^{j}c^{k}d^{s}$  such that $\alpha = \beta^2$. Now applying the  conditions $\alpha^{2} = 0$, $\alpha  = \beta^{2}$ and  by direct computation, we have $\alpha_{i} = 0$, for all $i \neq 0, 2 $  and $\alpha_{0} = \alpha_{2}$. Thus $ W  = \big  \{  \alpha_{0} (1+a^2),   \alpha_{0} \in F \big \}. $ 
     		Therefore $|W| = 2^{n}$, $l_{1} =   n$ and  $l_{2} = 29n$.  
     		Hence $V(FG)  \cong   C^{n}_{4}  \times C^{29n}_{2}$ and  the result follows. 
     		
     		\item If $p\neq 2$, then $|F|= p^{n}$. Using the similar arguments as in Theorem \ref{th1}, $F[C_{4} \times C^{3}_{3} ]$ is semisimple and  $m$=4, $\sum_{i=1}^{r} [D_{i} : F] = 32$. By   observation we have following possibilities for $q$:
     	\end{enumerate}
     	(a)	  If $q \equiv   1 \,\,  mod \, \, 4$,       then $t = 1$;	\\
     	(b)	  If $q \equiv   -1 \,\,  mod \, \, 4$,       then $t = 2$.	\\
     	
     Now have the following cases:
     	
     	\begin{enumerate}
     		\item If $q\equiv \,1\, mod\,4$,  then $T = \{1\}\,mod\,\,4.$  Thus, $p$-regular $F$-conjugacy classes are the conjugacy classes of $C_{4} \times C^{3}_{2} $  and  $w$=32.   Hence  $F[C_{4} \times C^{3}_{2}] \cong  F^{32}.$  
     		
     		\item If $q\equiv \,-1\,mod\,4$,   then $T = \{1, 3\}\,mod\,\,4.$  Thus, $p$-regular $F$-conjugacy classes are 	$\{1\}$,  $\{a, a^{3}\}$, $\{a^{2}  \}$ , $\{ b\}$, $\{c\}$, $\{d\}$,  $\{ab, a^{3}b \}$, $\{a^{2}b \}$, $\{ac, a^{3}c \}$, $\{a^{2}c \}$,   $\{ad, a^{3}d\}$, $\{ a^{2}d \}$, $\{ bc \}$, $\{cd \}$, $\{bd \}$, $\{ abc, a^{3}bc \}$, $\{ a^{2}bc\}$, $\{ acd, a^{3}cd \}$, $\{ a^{2}cd\}$, $\{ abd, a^{3}bd \}$, $\{ a^{2}bd \}$, $\{ bcd \}$, $\{ abcd, a^{3}bcd \}$, $\{ a^{2}bcd\}$ and  $w$=24.   Hence  $F[C_{4} \times C^{3}_{2}] \cong F^{16}\oplus F^{8}_{2}.$
     		
     		Hence we have the result.
     	\end{enumerate}
     \end{proof}

     \begin{theorem}
     	Let $F$ be a finite field of characteristic $p>0$ having  $q= p^{n}$ elements and $G \cong C^{5}_{2}.$
     	\begin{enumerate}
     		\item If $p=2$. Then,
     		$ U(F[C^{5}_{2}])  \cong   C^{31n}_{2}   \times C_{2^n -1} $.
     		
     		\item If $p\neq 2$. Then, 
     		
     	\end{enumerate}
     	
     	$U(F[C^{5}_{2}]) \cong C^{32}_{p^{n} -1} $, if $q \equiv \, 1\, mod\,  2.$
     	
     \end{theorem}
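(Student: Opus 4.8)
The plan is to follow the two-case structure of the statement, noting at the outset that the odd-characteristic branch is the most degenerate of all seven groups, because $C_2^5$ has exponent $2$.

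For $p=2$ no new work is needed: here $G\cong C_2^5 = C_p^k$ with $p=2$ and $k=5$, so I would simply invoke Lemma \ref{l1}, which gives $U(F C_2^5)\cong C_2^{\,n(2^5-1)}\times C_{2^n-1}=C_2^{31n}\times C_{2^n-1}$, exactly the claimed form. This case is an immediate specialization of the cited lemma and requires no computation of $J(FG)$ or of any $W$-type set, in contrast to the earlier non-semisimple proofs.

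For $p\neq 2$ I would first observe that $p\nmid |G|=32$, so Maschke's theorem makes $F[C_2^5]$ semisimple; combining the Wedderburn decomposition with Lemma \ref{l5} and the fact that $G$ is abelian (which forces every matrix block to be $1\times 1$) yields $F[C_2^5]\cong\bigoplus_{i=1}^r D_i$, a direct sum of finite field extensions $D_i$ of $F$. The crucial simplification is that $G$ has exponent $2$, so the l.c.m.\ of the orders of the $p$-regular elements is $m=2$ and a primitive $m$-th root of unity is $\xi=-1$, which already lies in $F$ since $\mathrm{Char}\,F$ is odd. Hence $F(\xi)=F$, the group $T$ collapses to $\{1\}\bmod 2$, and $F$-conjugacy coincides with ordinary conjugacy. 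As $G$ is abelian of order $32$, this produces exactly $32$ singleton $F$-conjugacy classes of $p$-regular elements.

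Finally, by Lemma \ref{l12} the algebra $F[C_2^5]$ has $32$ simple components, i.e.\ $r=32$; since Lemma \ref{l4} gives $\sum_{i=1}^r[D_i:F]=\dim_F\zeta(F[C_2^5])=32$, each extension degree must satisfy $[D_i:F]=1$, so $D_i=F$ for every $i$. Therefore $F[C_2^5]\cong F^{32}$, and passing to units, $U(F[C_2^5])\cong(F^*)^{32}\cong C_{p^n-1}^{32}$; this holds whenever $q$ is odd, which is precisely the stated condition $q\equiv 1\bmod 2$. I do not anticipate a genuine obstacle here: the only subtlety worth flagging is that, unlike the preceding theorems, there is no case-branching for odd $p$, because the exponent-$2$ hypothesis forces $m=2$ and $\xi=-1\in F$ unconditionally.
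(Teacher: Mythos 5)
Your proposal is correct and follows essentially the same route as the paper: Lemma \ref{l1} for $p=2$, and for odd $p$ the semisimplicity/Wedderburn argument with $m=2$, $T=\{1\}$, $32$ $F$-conjugacy classes, and the dimension count forcing $F[C_2^5]\cong F^{32}$. The only difference is that you spell out explicitly (e.g.\ $\xi=-1\in F$) what the paper compresses into ``using the similar arguments as in Theorem \ref{th1}.''
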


     \begin{proof}
     	The presentation of  $G\cong C^{5}_{2}$ is given by $C^{5}_{2} = < a, b, c, d, e \,\,  |\,\,  a^{2} = b^{2} = c^{2}= d^{2} = e^{2} = 1, \,ab = ba, bc = cb, dc = cd,  ed = de, ea = ae>.$
     	\begin{enumerate}
     		\item If $p=2$, then $FG$ will be non-semisimple in this case and  $|F| = q = 2^n$. Since $G\cong C^{5}_{2}$, therefore by Lemma \ref{l1}, we have 
     		$U(FG)  \cong   C^{31n}_{2}  \times C_{2^n -1}$.  
     		
     		\item If $p\neq 2$, then $|F|= p^{n}$. Using the similar arguments as in Theorem \ref{th1}, $F[C^{5}_{2}]$ is semisimple and $m$=2, $\sum_{i=1}^{r} [D_{i} : F] = 32$. By   observation we have $q \equiv   1 \,\,  mod \, \, 2$ and $t = 1$.
     	\end{enumerate}
     	Hence $q\equiv \,1\, mod\,2$,  implies  $T = \{1\}\,mod\,\,2.$  Thus, $p$-regular $F$-conjugacy classes are the conjugacy classes of $C^{5}_{2}$  and  $w$=32.  Therefore,  $F[C^{5}_{2}] \cong  F^{32}$ and we  have the result. 
     	
     \end{proof}
    
  \bibliographystyle{acm}
  \bibliography{SB_32_A}
    \end{document}